\def\R{\mathbb{R}}
\def\RR{\mathbb{R}}
\def\N{\mathbb{N}}
\newtheorem{theorem}{Theorem}[section]
\newtheorem{lemma}[theorem]{Lemma}
\newtheorem{thm}[theorem]{Theorem}
\newtheorem{prop}[theorem]{Proposition}
\newtheorem{cor}[theorem]{Corollary}
\theoremstyle{definition}
\newtheorem{ex}[theorem]{Example}
 \DeclareMathOperator{\cc}{cc}
\numberwithin{equation}{section}
\newcommand{\bn}{\par\bigskip\noindent}
\newcommand{\pars}{\par\smallskip}
\newcommand{\adresse}{\par\bigskip \small\rm
Cimpri\v c, Jaka\par
Faculty of Mathematics and Physics\par
University of Ljubljana\par
Jadranska 21, SI-1000 Ljubljana, Slovenija\par
email: cimpric@fmf.uni-lj.si
\pars \pars \pars
Marshall, Murray\par
Department of Mathematics and Statistics\par
University of Saskatchewan\par
Saskatoon, SK S7N 5E6, Canada\par
email: marshall@math.usask.ca
\pars \pars \pars
Netzer, Tim\par
Fachbereich Mathematik und Statistik\par
Universit\"{a}t Konstanz\par
D-78457 Konstanz, Germany\par
email: Tim.Netzer@uni-konstanz.de}
\begin{document}
\title{Closures of quadratic modules}
\author{Jaka Cimpri\v c, Murray Marshall, Tim Netzer}

\subjclass[2000]{12D15, 14P99, 44A60}
\date{April 4, 2009}
\keywords{moment problem, positive polynomials, sums of squares.}

\begin{abstract}
We consider the problem of determining the closure $\overline{M}$
of a quadratic module $M$ in a  commutative $\Bbb{R}$-algebra with
respect to the finest locally convex topology. This is of interest
in deciding when the moment problem is solvable \cite{S1}
\cite{S2} and in analyzing algorithms for polynomial optimization
involving semidefinite programming \cite{L}. The closure of a
semiordering is also considered, and it is shown that the space
$\mathcal{Y}_M$ consisting of all semiorderings lying over $M$
plays an important role in understanding the closure of $M$. The
result of Schm\"udgen for preorderings in \cite{S2} is
strengthened and extended to quadratic modules. The extended
result is used to construct an example of a non-archimedean
quadratic module describing a compact semialgebraic set that has
the strong moment property. The same result is used to obtain a
recursive description of $\overline{M}$ which is valid in many
cases.
\end{abstract}

\maketitle

In Section 1 we consider the general relationship between the
closure  $\overline{C}$ and the sequential closure $C^{\ddagger}$
of a subset $C$ of a real vector space $V$ in the finest locally
convex topology. We are mainly interested in the case where $C$ is
a cone in $V$. We consider cones with non-empty interior and cones satisfying $C\cup -C = V$.

In Section 2 we begin our investigation of the closure
$\overline{M}$ of a quadratic module $M$ of a commutative
$\Bbb{R}$-algebra $A$; the focus is on  finitely generated
quadratic modules in finitely generated algebras. The closure of a
semiordering $Q$ of $A$ is also considered, and it is shown that
the space $\mathcal{Y}_M$ consisting of all semiorderings of $A$
lying over $M$ plays an important role in understanding the
closure of $M$; see Propositions \ref{2.1}, \ref{2.2} and \ref{2.3}. The
result of Schm\"udgen for preorderings in \cite{S2} is strengthened and extended to
quadratic modules; see Theorem \ref{new}.

In Section 3 we consider the case of quadratic modules that
describe compact semialgebraic sets. We use Theorem \ref{new} to
deduce various results; see Theorems \ref{3.1} and \ref{c}; and also to construct an example where
$\mathcal{K}_M$ is compact, $M$ satisfies the strong moment
property (SMP), but $M$ is not archimedean; see Example \ref{3.4}.

Theorem \ref{new} is also used in Section 4, to obtain a recursive
description of $\overline{M}$ which although it is not valid in general; see Example \ref{couex}; is valid in many cases; see
Theorem \ref{4.5}.

In Section 5, which is an appendix to Section 1, we give an
example of a cone $C$ where the increasing sequence of iterated
sequential closures $$C \subseteq C^{\ddagger} \subseteq
(C^{\ddagger})^{\ddagger} \subseteq \cdots$$ terminates after
precisely $n$ steps. In the case of quadratic modules and
preorderings, nothing much is known about the sequence of iterated
sequential closures beyond the example with $M^{\ddagger} \ne
\overline{M}$ given in \cite{N3}.

\section{Closures of Cones}

Consider a real vector space $V$. 
A convex set $U\subseteq V$ is
called \textit{absorbent}, if for every $x\in V$ there exists
$\lambda> 0$ such that $x\in\lambda U.$ $U$ is called
\textit{symmetric}, if $\lambda U\subseteq U$ for all
$|\lambda|\leq 1.$
The set of all convex, absorbent and symmetric subsets of $V$ forms
a zero neighborhood base of a vector space topology on $V$ (see
\cite[II.25]{d} or \cite{S}). This topology is called the \textit{finest locally convex
topology} on $V$.  
$V$ endowed with this topology is hausdorff, each linear functional on $V$ is
continuous, and each finite dimensional subspace of $V$ inherits
the euclidean topology.  

Let $C$ be a subset of $V$ and denote by
$C^{\ddagger}$ the set of all elements of $V$ which are
expressible as the limit of some sequence of elements of $C$. By
\cite[Ch. 2, Example 7(b)]{S}, every converging sequence in $V$
lies in a finite dimensional subspace of $V$, so $C^{\ddagger}$ is
just the union of the $\overline{C\cap W}$, $W$ running through
the set of all finite dimensional subspaces of $V$. (Observe: Each
such $W$ is closed in $V$, so $\overline{C\cap W}$ is just the
closure of $C\cap W$ in $W$.) We refer to $C^{\ddagger}$ as the
\it sequential closure \rm of $C$. Clearly $C \subseteq
C^{\ddagger} \subseteq \overline{C}$, where $\overline{C}$ denotes
the closure of $C$. For any subset $C$ of $V$ we have a
transfinite increasing sequence of subsets
$(C_{\lambda})_{\lambda\ge 0}$ of $V$ defined by $C_0 = C$,
$C_{\lambda^+} =(C_{\lambda})^{\ddagger}$, and $C_{\mu} =
\cup_{\lambda <\mu} C_{\lambda}$ if $\mu$ is a limit ordinal.
Question: Can one say anything at all about when this sequence
terminates?  We return to this point later; see the appendix at
the end of the paper.

We are in particular interested in the case where the dimension of
$V$ is countable. In this case, a subset $C$ of $V$ is closed if
and only if $C\cap W$ is closed in $W$ for each finite dimensional
subspace $W$ of $V$ \cite[Proposition 1]{B}. So $C^{\ddagger} =C$
if and only if $C$ is closed. Thus the sequence of iterated
sequential closures of $C$ terminates precisely at $\overline{C}$.

For the time being, we drop the assumption that $V$ is of
countable dimension. We are in particular interested in the case
when $C$ is a cone of $V$, i.e. if $C+C\subseteq C$ and
$\Bbb{R}^+\cdot C\subseteq C$ holds. In this case $C^{\ddagger}$
and $\overline{C}$ are also cones. Every cone is a convex set.
If $U$ is any convex open set
in $V$ such that $U\cap C = \emptyset$ then, by the Separation
Theorem \cite[II.39, Corollary 5]{d} (or \cite[Theorem 3.6.3]{M} in
the case of countable dimension), there exists a linear map $L : V
\rightarrow \Bbb{R}$ such that $L\ge 0$ on $C$ and $L<0$ on $U$.
This implies $\overline{C} = C^{\vee\vee}$. Here, $C^{\vee} $ is
the set of all linear functionals $L : V \rightarrow \Bbb{R}$ such
that $L(v)\ge 0$ for all $v\in C$ and $C^{\vee\vee} $ is the set
of all $v\in V$ such that $L(v)\ge 0$ for all $L \in C^{\vee}$.

\begin{prop}\label{1.1}
Let $C$ be a cone in $V$ and let $v \in V$. The following
are equivalent:
\begin{enumerate}
\item $v$ is the limit of a sequence of elements of $C$.
\item $\exists$ $q\in V$ such that $v+\epsilon q \in C$ for each real $\epsilon >0$.
\end{enumerate}
\end{prop}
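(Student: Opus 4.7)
The plan is to reduce everything to a finite-dimensional subspace, where the finest locally convex topology coincides with the Euclidean topology and standard convex-analysis arguments apply.

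The implication $(2)\Rightarrow(1)$ is nearly immediate: setting $\epsilon_n = 1/n$, the sequence $v + \frac{1}{n}q$ lies in $C$ by hypothesis and in the (at most two-dimensional) subspace $\mathrm{span}(v,q)$; since this subspace inherits the Euclidean topology, the sequence converges to $v$ in $V$.

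For the harder direction $(1)\Rightarrow(2)$, the key move is to invoke the remark recorded just above the proposition: every convergent sequence in $V$ lies in some finite-dimensional subspace $W$. Hence $v$ belongs to $\overline{C \cap W}$, and $D := C \cap W$ is a convex cone in the finite-dimensional space $W$. Such a $D$ always has non-empty relative interior (its interior in its affine hull), and I would choose $q$ from this relative interior.

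To verify $v + \epsilon q \in C$ for every $\epsilon > 0$, I would rewrite
\[
v + \epsilon q \;=\; (1+\epsilon)\!\left[\tfrac{1}{1+\epsilon}\,v + \tfrac{\epsilon}{1+\epsilon}\,q\right].
\]
The bracketed point is a convex combination of $v \in \overline{D}$ and $q$ in the relative interior of $D$, with strictly positive weight on $q$; the standard convex-analysis fact that segments from closure points to relative-interior points lie in the relative interior (away from the closure endpoint) places it in $D$. Scaling by $1+\epsilon > 0$ then keeps us inside $D \subseteq C$, completing the verification.

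The one potentially subtle point I anticipate is the choice of $q$: a naive attempt to take $q$ in the interior of $D$ in $W$ fails whenever $D$ is not full-dimensional in $W$, and passing to the affine hull and working with the relative interior is what repairs this. The remainder of the argument, including the $(2)\Rightarrow(1)$ direction, is completely routine.
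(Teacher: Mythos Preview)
Your proof is correct, but it proceeds differently from the paper's. For $(1)\Rightarrow(2)$, the paper avoids the relative-interior machinery entirely: having observed that the sequence lives in a finite-dimensional subspace, it chooses a basis $w_1,\dots,w_N$ of that subspace consisting of elements of $C$ (possible since the $v_i\in C$ span it), sets $q=\sum_j w_j$, and verifies $v+\epsilon q\in C$ by a bare-hands coefficient comparison: writing $v_i=\sum_j r_{ij}w_j$ and $v=\sum_j r_j w_j$, for large $i$ one has $r_{ij}<r_j+\epsilon$, so $v+\epsilon q=v_i+\sum_j(r_j+\epsilon-r_{ij})w_j$ is a sum of an element of $C$ and a nonnegative combination of elements of $C$.

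What the two approaches buy: the paper's argument is more elementary and explicitly constructive, and it immediately yields the remark (used just after the proposition) that $q$ can be taken to be an \emph{interior} point of $C\cap W$ in $W$, not merely a relative-interior point. Your argument is cleaner conceptually and shows exactly which convex-analysis fact is doing the work; it recovers the same interior-point conclusion provided one takes $W$ to be the span of the sequence (so that $\operatorname{aff}(D)=W$), though you did not make this explicit.
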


\begin{proof} (2) $\Rightarrow$ (1). Let $v_i = v+\frac{1}{i}q$, $i=1,2,\cdots$.
Then $v_i\in C$ and $v_i\rightarrow v$ as $i\rightarrow \infty$. (1) $\Rightarrow$ (2).
Let $v = \lim_{i\rightarrow \infty} v_i$, $v_i\in C$. As explained earlier, the subspace of $V$
spanned by $v_1,v_2,\cdots$ is finite dimensional. Let $w_1,\dots,w_N \in C$ be a basis for this subspace.
Then $v_i = \sum_{j=1}^N r_{ij}w_j$, $v= \sum_{j=1}^N r_jw_j$, $r_{ij}, r_i \in \Bbb{R}$, $r_j = \lim_{i\rightarrow \infty} r_{ij}$.
Let $q:= \sum_{j=1}^N w_j$. Then, for any real $\epsilon >0$, $r_{ij}<r_j+\epsilon$ for $i$ sufficiently large,
so $v+\epsilon q = \sum_{j=1}^N (r_j+\epsilon)w_j = \sum_{j=1}^N r_{ij}w_j+\sum_{j=1}^N (r_j+\epsilon-r_{ij})w_j = v_i+\sum_{j=1}^N (r_j+\epsilon-r_{ij})w_j \in C$.
\end{proof}

\begin{cor}\label{1.2} If $C$ is a cone of $V$ then $$C^{\ddagger} = \{ v\in V \mid \exists q \in V  \text{ such that } v+\epsilon q
\in C \text{ for all real } \epsilon >0\}.$$
\end{cor}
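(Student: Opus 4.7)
The plan is to recognize that this corollary is essentially just a restatement of Proposition \ref{1.1} once the definition of $C^{\ddagger}$ is unpacked. By definition, $C^{\ddagger}$ consists of those $v \in V$ which are expressible as the limit of some sequence of elements of $C$; that is precisely condition (1) of Proposition \ref{1.1}. So I would simply argue that $v \in C^{\ddagger}$ if and only if $v$ satisfies condition (1), if and only if (by Proposition \ref{1.1}) $v$ satisfies condition (2), which is exactly the condition defining the set on the right-hand side.

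Concretely, the write-up would be a single short paragraph: fix $v \in V$; then $v \in C^{\ddagger}$ means $v$ is a limit of a sequence in $C$, and Proposition \ref{1.1} shows this is equivalent to the existence of some $q \in V$ with $v + \epsilon q \in C$ for all real $\epsilon > 0$. Taking this for every $v$ gives the claimed set equality.

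There is no real obstacle here — the hypothesis that $C$ is a cone is exactly the hypothesis of Proposition \ref{1.1}, and no additional structural argument is needed. The work is entirely done in Proposition \ref{1.1}; the corollary is purely a reformulation in set-builder notation.
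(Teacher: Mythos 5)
Your proposal is correct and matches the paper exactly: the corollary is stated without proof precisely because it is just Proposition \ref{1.1} rewritten in set-builder form, using the definition of $C^{\ddagger}$ as the set of limits of sequences from $C$. Nothing further is needed.
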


The proof of Proposition \ref{1.1} shows we can always choose
$q\in C$.  In fact, we can find a finite dimensional subspace $W$
of $V$ (namely, the subspace of $V$ spanned by $w_1,\dots,w_N$)
such that $q\in W$ and $q$ is an interior point of $C\cap W$.

Cones with non-empty interior are of special interest. For a subset $C$ of $V$, a vector $v \in C$ is called an \it algebraic interior point of $C$ \rm if  for all $w \in V$ there is
a real $\epsilon >0$ such that $v+\epsilon w \in C$. 

\begin{prop}\label{x} \
\begin{enumerate}
\item  Let $C$ be a convex set in $V$. A vector $v \in C$ is an interior point of $C$ iff $v$ is an algebraic interior point of $C$.
\item Let $q$ be an interior point of a cone $C$ of $V$. 
If $v \in \overline{C}$ then $v+\epsilon q$ is an interior point of $C$ for all real $\epsilon >0$. 
\item If $C$ is a cone of $V$ with non-empty interior,
then $C^{\ddagger} = \overline{C}=\overline{\operatorname{int}(C)}=\operatorname{int}(C)^{\ddagger}$.
\end{enumerate}
\end{prop}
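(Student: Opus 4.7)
\textbf{Plan for Proposition \ref{x}.} For part (1), the direction that interior implies algebraic interior is immediate from continuity of the map $\epsilon \mapsto v+\epsilon w$ at $\epsilon=0$. For the converse, I would exhibit an explicit open neighborhood of $v$ inside $C$: take $U := (C-v)\cap(v-C)$. This is convex as an intersection of convex sets, and since $0\in U$ and $U=-U$, convexity yields $\lambda U\subseteq U$ for $|\lambda|\le 1$. To see $U$ is absorbent, let $w\in V$; algebraic interiority of $v$ gives $\delta>0$ with $v\pm \delta w\in C$, hence $\delta w\in U$. As convex, symmetric, absorbent sets form a $0$-neighborhood base in the finest locally convex topology, $v+U$ is an open neighborhood of $v$ contained in $C$.

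For part (2), by part (1) it suffices to show that $v+\epsilon q$ is an algebraic interior point of $C$. Fix a convex, symmetric, absorbent $0$-neighborhood $U$ with $q+U\subseteq C$, available because $q$ is an interior point of $C$. Given any $w\in V$, first use $v\in\overline{C}$ and the fact that $v+\tfrac{\epsilon}{2}U$ is an open neighborhood of $v$ to choose $v_0\in C$ with $v_0-v\in \tfrac{\epsilon}{2}U$. Next, since $U$ absorbs $w$, choose $\mu>0$ so small that $\mu w\in \tfrac{\epsilon}{2}U$. Then
\[
\frac{v-v_0+\mu w}{\epsilon}\in \tfrac{1}{2}U+\tfrac{1}{2}U=U
\]
by convexity of $U$, so $q+\tfrac{1}{\epsilon}(v-v_0+\mu w)\in C$; scaling by $\epsilon$ (since $C$ is a cone) and adding $v_0\in C$ yields $v+\epsilon q+\mu w\in C$, as desired.

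For part (3), fix an interior point $q$ of $C$. Combining part (2) with Corollary \ref{1.2} gives $\overline{C}\subseteq C^{\ddagger}$, and the reverse inclusion is automatic, yielding $C^{\ddagger}=\overline{C}$. For the remaining equalities, given $v\in\overline{C}$, the sequence $v+q/i$ lies in $\operatorname{int}(C)$ by part (2) and converges to $v$ in the finite-dimensional subspace spanned by $v$ and $q$ (hence in $V$), so $v\in\overline{\operatorname{int}(C)}$ and $v\in\operatorname{int}(C)^{\ddagger}$; the reverse containments are trivial. The main obstacle will be part (2): translating the qualitative approximation statement $v\in\overline{C}$ and the qualitative interiority of $q$ into a single quantitative inclusion, by arranging that the slack provided by $\epsilon q$ simultaneously swallows both the approximation error $v_0-v$ and the arbitrary perturbation $\mu w$.
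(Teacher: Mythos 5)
Your proposal is correct, and its overall architecture matches the paper's: reduce ``interior'' to ``algebraic interior'' in (1), show in (2) that $v+\epsilon q$ is interior, and conclude (3) via Corollary \ref{1.2}. You differ in two places, both legitimately. In (1), the paper translates $v$ to $0$, fixes a Hamel basis $(v_i)$ with scalars $\epsilon_i$ such that $\pm\epsilon_i v_i\in C$, and takes $U$ to be the convex hull of $\{\pm\epsilon_i v_i\}$; your $U=(C-v)\cap(v-C)$ is the symmetrization of $C$ about $v$, which is basis-free and arguably cleaner (note only that your absorbency check, ``$v\pm\delta w\in C$,'' tacitly uses convexity of $C$ to get a common $\delta$ from the two one-sided $\epsilon$'s in the definition, and that $v+U$ need not literally be open --- it is merely a neighborhood of $v$, which is all you need). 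In (2), the paper simply cites the standard convexity fact that the half-open segment from a point of $\overline{C}$ to an interior point lies in $\operatorname{int}(C)$ and then rescales by $1+\epsilon$; you instead prove the required instance from scratch with an $\tfrac{\epsilon}{2}$--$\tfrac{\epsilon}{2}$ splitting of the slack $\epsilon U$ between the approximation error $v_0-v$ and the perturbation $\mu w$, using the cone axioms to rescale and add $v_0$. Your version is self-contained where the paper's is shorter but reference-dependent; part (3) is handled the same way in both.
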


\begin{proof} (1) Let $v \in C$ be an algebraic interior point. Translating, we can assume $v=0$. Fix a basis $v_i$, $i\in I$ for $V$ and real $\epsilon_i >0$
such that $\epsilon_iv_i$ and $-\epsilon_iv_i$ belong to $C$. Take $U$ to be the convex hull of the set
$\{ \epsilon_iv_i, -\epsilon_iv_i \mid i \in I\}$. $U$ is convex, absorbent and symmetric and $0 \in U \subseteq C$.
The converse is clear.

(2) If $q \in \operatorname{int}(C)$ and $v \in \overline{C}$ then
$\lambda v+(1-\lambda)q \in \operatorname{int}(C)$  for all $0\le \lambda <1$, by  \cite[chapter III, Lemma 2.4]{BV} or \cite[page 38, 2.1.1]{S}.
Applying this with $\lambda = \frac{1}{1+\epsilon}$ and multiplying by $1+\epsilon$ yields $v+\epsilon q \in \operatorname{int}(C)$
for all real $\epsilon >0$. 

(3) This is immediate from (2), by Corollary \ref{1.2}.
\end{proof}

Here is more folklore concerning cones with non-empty interior:

\begin{prop}
\label{y}
Suppose that $C$ is a cone of $V$, $q$ is an interior point of $C$, and $v\in V$.
Then the following are equivalent:
\begin{enumerate}
\item $v$ is an interior point of $C$,
\item there exist $\epsilon >0$ such that $v- \epsilon q \in C$,
\item for every nonzero $L \in C^{\vee}$, $L(v)>0$.
\end{enumerate}
\end{prop}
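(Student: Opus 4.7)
The plan is to establish $(1) \Leftrightarrow (2)$ via Proposition \ref{x}, prove $(1) \Rightarrow (3)$ directly from the algebraic-interior description, and then handle $(3) \Rightarrow (1)$ by a Hahn--Banach separation argument. The last implication is the main work.

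For $(1) \Rightarrow (2)$, Proposition \ref{x}(1) tells us that an interior point $v$ of $C$ is an algebraic interior point; applying this in the direction $-q$ produces $\epsilon > 0$ with $v - \epsilon q \in C$. For $(2) \Rightarrow (1)$, observe first that $\epsilon q \in \operatorname{int}(C)$, since $\operatorname{int}(C)$ is closed under multiplication by positive reals (as $C$ is a cone and scalar multiplication is a linear homeomorphism of $V$). Writing $v = (v-\epsilon q) + \epsilon q$ with $v-\epsilon q \in C\subseteq \overline{C}$, Proposition \ref{x}(2) applied to $v - \epsilon q$ and $q$ with parameter $\epsilon$ yields $v \in \operatorname{int}(C)$.

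For $(1) \Rightarrow (3)$: let $L \in C^\vee$ be nonzero; then $L(v) \geq 0$ since $v \in C$. If $L(v) = 0$, then using that $v$ is algebraic interior, for each $w \in V$ there is $\epsilon > 0$ with $v + \epsilon w \in C$, so $\epsilon L(w) = L(v+\epsilon w) \geq 0$, hence $L(w) \geq 0$. Replacing $w$ by $-w$ forces $L \equiv 0$, contradicting nonzeroness.

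For the main step $(3) \Rightarrow (1)$, I argue the contrapositive. Assume $v \notin \operatorname{int}(C)$. Since $\operatorname{int}(C)$ is nonempty (it contains $q$), open, convex, and avoids $v$, the geometric Hahn--Banach separation theorem yields a nonzero linear $L\colon V\to \R$ and $c \in \R$ with
\[
L(v) \leq c < L(u) \quad \text{for all } u \in \operatorname{int}(C).
\]
Since $\operatorname{int}(C)$ is closed under positive scaling, this lower bound forces $L \geq 0$ on $\operatorname{int}(C)$ (else $L(\lambda u_0) \to -\infty$ along $\lambda \to \infty$ for some $u_0 \in \operatorname{int}(C)$ with $L(u_0) < 0$). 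For $x \in C$, Proposition \ref{x}(2) gives $x + \epsilon q \in \operatorname{int}(C)$ for every $\epsilon > 0$, so $0 \leq L(x+\epsilon q) = L(x) + \epsilon L(q)$; letting $\epsilon \to 0^+$ yields $L(x) \geq 0$, i.e., $L \in C^\vee$. Finally, substituting $u = \epsilon q$ in the separation inequality and letting $\epsilon \to 0^+$ gives $c \leq 0$, so $L(v) \leq c \leq 0$, contradicting $(3)$. The delicate point is ensuring the separating functional actually lies in $C^\vee$ and has nonpositive value at $v$; both are handled by the cone structure together with the fact that $q$ is interior.
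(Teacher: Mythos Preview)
Your proof is correct and follows essentially the same approach as the paper: the algebraic-interior characterization from Proposition~\ref{x} handles the equivalence of (1) and (2), and the Separation Theorem drives $(3)\Rightarrow(1)$. The only organizational difference is that the paper argues in a cycle $(1)\Rightarrow(2)\Rightarrow(3)\Rightarrow(1)$ (deducing $(2)\Rightarrow(3)$ by first showing $L(q)>0$ for nonzero $L\in C^\vee$, then using $L(v)\ge\epsilon L(q)$), whereas you prove $(2)\Rightarrow(1)$ via Proposition~\ref{x}(2) and $(1)\Rightarrow(3)$ directly; your treatment of $(3)\Rightarrow(1)$ is also a bit more explicit about why the separating functional lands in $C^\vee$ with $L(v)\le 0$, but the substance is the same.
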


\begin{proof}
(1) implies (2) by the easy direction of assertion (1) in Proposition \ref{x}.
To prove that (2) implies (3), pick $L \in C^{\vee}$ and $w \in V$
such that $L(w) \ne 0$. Since $q$ is an interior point of $C$,
there exists a $\delta>0$ such that $q \pm \delta w \in C$.
It follows that $L(q)\ge \delta \vert L(w) \vert >0$. Hence,
$L(v) \ge \epsilon L(q) >0$. Finally, we prove that (3) implies (1)
by contradiction. Note that $\operatorname{int}(C)$ is an open
convex set. If $v \not\in \operatorname{int}(C)$, there exists by the Separation Theorem a functional $L$ on $V$ such that $L(v) \le 0$
and $L(\operatorname{int}(C)) > 0$. It follows that $L(\overline{\operatorname{int}(C)}) \ge 0$.
But $\overline{\operatorname{int}(C)}=\overline{C}$ by assertion (3) of Proposition \ref{x},
hence $L(C) \ge 0$.
\end{proof}

We are also interested in cones satisfying $C\cup -C = V$. Note: For any cone $C$ of $V$, $C\cap -C$ is a subspace of $V$.

\begin{prop}\label{1.3} Let
$C$ be a cone of $V$ satisfying $C\cup -C = V$. The following are equivalent:
\begin{enumerate}
\item $C$ is closed in $V$.
\item The vector space $\frac{V}{C\cap -C}$ has dimension $\le 1$. 
\end{enumerate}
\end{prop}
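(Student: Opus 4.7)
The plan is to reduce both directions to a study of the quotient $V/W$, where $W := C \cap -C$. Note that $W$ is a subspace, that $W \subseteq C$, and that the quotient $\tilde C = \pi(C)$ under $\pi : V \to V/W$ is a cone in $V/W$ satisfying $\tilde C \cup -\tilde C = V/W$ and (by the identity $\pi^{-1}(\tilde C) \cap \pi^{-1}(-\tilde C) = C \cap -C = W$) $\tilde C \cap -\tilde C = \{0\}$. Since $\pi$ is linear and hence continuous (all linear maps are continuous in the finest locally convex topology), $C = \pi^{-1}(\tilde C)$ is closed in $V$ whenever $\tilde C$ is closed in $V/W$. So the key observation is that closedness of $C$ can be studied via the quotient.

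For the direction (2) $\Rightarrow$ (1), if $\dim(V/W) = 0$, then $C = V$. If $\dim(V/W) = 1$, pick a nonzero $v_0 \in V$ with $v_0 \notin W$. Since $V = C \cup -C$, we may assume $v_0 \in C$, and then $-v_0 \notin C$ (else $v_0 \in W$). A short argument using positive scaling and the subspace property of $W$ shows $C = W + \mathbb{R}_{\ge 0} v_0$, so $C$ is the preimage of $\mathbb{R}_{\ge 0}$ under the linear coordinate functional $w + \lambda v_0 \mapsto \lambda$. This functional is continuous, so $C$ is closed.

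For the direction (1) $\Rightarrow$ (2), I prove the contrapositive: if $\dim(V/W) \ge 2$, then $C$ is not closed. Choose vectors $v_1, v_2 \in V$ whose images in $V/W$ are linearly independent, and set $U' := \mathbb{R} v_1 + \mathbb{R} v_2$, a $2$-dimensional subspace with $U' \cap W = \{0\}$. Then $D := C \cap U'$ is a convex cone in $U' \cong \mathbb{R}^2$ satisfying $D \cup -D = U'$ (from $C \cup -C = V$) and $D \cap -D = W \cap U' = \{0\}$. The remaining step is a case analysis: a closed convex cone in $\mathbb{R}^2$ is either $\{0\}$, a ray, a line, a proper sector, a closed half-plane, or all of $\mathbb{R}^2$, and none of these simultaneously satisfies both conditions. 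Hence $D$ is not closed in the euclidean topology of $U'$, which coincides with the subspace topology inherited from $V$, so $C$ is not closed in $V$.

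The main obstacle, and the only place the argument is not formal, is the $2$-dimensional classification in the last paragraph; everything else is bookkeeping with the quotient. This step is however genuinely elementary: one checks each of the finitely many shapes of closed convex cones in the plane and verifies that each fails either the covering property or the trivial-intersection property. Passing from this planar obstruction back to $V$ uses only that $U'$ is finite dimensional and therefore inherits its own euclidean topology as a subspace of $V$, so that $C \cap U'$ would be closed in $U'$ whenever $C$ is closed in $V$.
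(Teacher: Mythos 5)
Your proof is correct and follows the same skeleton as the paper's: pass to the quotient by $W = C\cap -C$ for (2)$\Rightarrow$(1), and reduce (1)$\Rightarrow$(2) to a two-dimensional subspace $U'$ on which the induced cone $D = C\cap U'$ satisfies $D\cup -D = U'$ and $D\cap -D=\{0\}$. The only divergence is the endgame in the plane. You invoke the classification of closed convex cones in $\mathbb{R}^2$ and check that no shape satisfies both conditions; the paper instead produces an explicit point of $\overline{C}\setminus C$: it takes $v=v_1+v_2$ interior to $-C$, separates $C$ from $\operatorname{int}(-C)$ by a linear functional $L$, picks $w\ne 0$ with $L(w)=0$ and $w\in -C\setminus C$, and exhibits points of $C$ on the line through $v$ and $w$ arbitrarily close to $w$. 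Your classification is standard, but as written it is asserted rather than proved, and it is really the entire content of this direction, so it deserves a sentence of justification. One quick route: $D\cup -D=U'$ forces $D$ to have nonempty interior (a convex set with empty interior in the plane lies in a line, and $U'$ is not the union of two lines), and a closed convex cone in $\mathbb{R}^2$ with nonempty interior is the whole plane, a closed half-plane, or a sector of angle less than $\pi$ --- the first two violate $D\cap -D=\{0\}$ and the third violates $D\cup -D=U'$. With that filled in, the rest of your write-up --- the identification $\pi^{-1}(\pi(C))=C$ using $W\subseteq C$, the continuity of linear maps in the finest locally convex topology, the computations $D\cap -D=W\cap U'=\{0\}$ and $D\cup -D=U'$, and the description $C=W+\mathbb{R}_{\ge 0}v_0$ in the one-dimensional case --- all checks out, and the two proofs are of comparable length; the paper's separation argument simply trades the planar classification for an appeal to Hahn--Banach.
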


\begin{proof} (2) $\Rightarrow$ (1). Replacing $V$ by $V/(C\cap -C)$ and $C$ by $C/(C\cap -C)$, we are reduced to the
case $C\cap -C= \{ 0 \}$. If $V$ is $0$-dimensional then $V = \{ 0 \} =
C$, so $C$ is closed in $V$. If $V$ is $1$-dimensional, fix $v \in C$,
$v\ne 0$. Then $V= \Bbb{R}v$ and $C= \Bbb{R}^+v$, so $C$ is closed
in $V$. (1) $\Rightarrow$ (2). Suppose $C$ is closed and
$\frac{V}{C\cap -C}$ has dimension $\ge 2$. Fix $v_1,v_2 \in V$ linearly
independent modulo $C\cap -C$. Let $W$ be denote the subspace of
$V$ spanned by $v_1,v_2$. Then $C\cap W$ is closed in $W$, $(C\cap
W)\cup -(C\cap W) = W$, and $v_1,v_2 \in W$ are linearly
independent modulo $(C\cap W)\cap -(C\cap W)$. In this way,
replacing $V$ by $W$ and $C$ by $C\cap W$, we are reduced to the
case where $V = \Bbb{R}v_1\oplus \Bbb{R}v_2$. Replacing $v_i$ by
$-v_i$, if necessary, we can suppose $v_i\in -C$, $i=1,2$. Then
$v:=v_1+v_2$ is an interior point of $-C$. In particular,
$\operatorname{int}(-C)\ne \emptyset$. Since $v_1$ and $v_2$ are
linearly independent modulo $C\cap -C$, we find $C\cap -C=\{0\}$
and $C\cap \operatorname{int}(-C) = \emptyset$. By the Separation
Theorem, there exists a linear map $L : V\rightarrow \Bbb{R}$ with
$L\ge 0$ on $C$, $L<0$ on $\operatorname{int}(-C)$ (so $L\le 0$ on
$-C$). Since $V$ is $2$-dimensional, there exists $w \in V$, $L(w)=0$, $w\ne
0$. Replacing $w$ by $-w$ if necessary, we may assume $w\in -C$
(so $w\notin C$). Consider the line through $v$ and $w$. Since
$L(v)<0$ and $L(w)=0$, there are points $u$ on this line
arbitrarily close to $w$ satisfying $L(u)>0$ (so $u\in C$). This
proves $w \in \overline{C}$ for all such points $w$, so $C$ is not
closed, a contradiction.
\end{proof}

\begin{cor}\label{1.4} Suppose
 $C$ is a cone of $V$ satisfying $C\cup -C = V$. Then $C^{\ddagger}$ is closed, i.e., $\overline{C} = C^{\ddagger}$.
\end{cor}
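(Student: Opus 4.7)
The plan is to apply Proposition \ref{1.3} to $C^{\ddagger}$ itself. Since $C\subseteq C^{\ddagger}$, the hypothesis $C\cup -C=V$ immediately gives $C^{\ddagger}\cup -C^{\ddagger}=V$, and $C^{\ddagger}$ is a cone by the remarks preceding Proposition \ref{1.1}. By Proposition \ref{1.3} applied to $C^{\ddagger}$, it is closed if and only if $\dim V/(C^{\ddagger}\cap -C^{\ddagger})\le 1$, so it suffices to verify this dimension bound; closedness of $C^{\ddagger}$ then automatically yields $\overline{C}=C^{\ddagger}$ since $C^{\ddagger}$ is sandwiched between $C$ and $\overline{C}$.

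I would verify the dimension bound by contradiction. Suppose $v_1,v_2\in V$ are linearly independent modulo $C^{\ddagger}\cap -C^{\ddagger}$, and set $W=\mathbb{R}v_1+\mathbb{R}v_2$. Restricting the hypothesis to $W$ gives $(C\cap W)\cup -(C\cap W)=W$. Let $\overline{C\cap W}$ denote the closure taken inside the two-dimensional (euclidean) space $W$. This is a closed cone in $W$ with $(\overline{C\cap W})\cup -(\overline{C\cap W})=W$, so applying Proposition \ref{1.3} inside $W$ yields $\dim W/(\overline{C\cap W}\cap -\overline{C\cap W})\le 1$. Hence $v_1,v_2$ are linearly dependent modulo $\overline{C\cap W}\cap -\overline{C\cap W}$.

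To close the loop, I would invoke the description of $C^{\ddagger}$ recorded early in Section 1: $C^{\ddagger}$ is the union of the $\overline{C\cap W'}$ taken over all finite-dimensional subspaces $W'$ of $V$. Taking $W'=W$ yields $\overline{C\cap W}\subseteq C^{\ddagger}$, so $\overline{C\cap W}\cap -\overline{C\cap W}\subseteq C^{\ddagger}\cap -C^{\ddagger}$. This contradicts the linear independence of $v_1,v_2$ modulo $C^{\ddagger}\cap -C^{\ddagger}$. The only subtle point, and the one I expect to be easy to mishandle, is the interplay of topologies when restricting to $W$: one must notice that the finite-dimensional (euclidean) closure of $C\cap W$ in $W$ already lies inside $C^{\ddagger}$, which is exactly what the description of $C^{\ddagger}$ as a union of such closures provides. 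Once that is in hand, Proposition \ref{1.3} in dimension two does all the remaining work.
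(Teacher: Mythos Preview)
Your proof is correct and follows essentially the same approach as the paper's: both reduce to showing $\dim V/(C^{\ddagger}\cap -C^{\ddagger})\le 1$ via Proposition~\ref{1.3}, then derive a contradiction by restricting to a two-dimensional $W$, applying Proposition~\ref{1.3} to the closed cone $\overline{C\cap W}$ in $W$, and using $\overline{C\cap W}\subseteq C^{\ddagger}$. The only cosmetic difference is that you spell out explicitly why closedness of $C^{\ddagger}$ yields $\overline{C}=C^{\ddagger}$, which the paper leaves implicit.
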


\begin{proof} According to Proposition \ref{1.3} it suffices to show that $\frac{V}{C^{\ddagger}\cap -C^{\ddagger}}$ has dimension at most one.
 Suppose this is not the case, so we have $v_1,v_2 \in V$ linearly independent modulo $C^{\ddagger}\cap -C^{\ddagger}$. Let $W = \Bbb{R}v_1\oplus \Bbb{R}v_2$
  and consider the closed cone $\overline{C\cap W}$ in $W$. Since $\overline{C\cap W} \cup -\overline{C\cap W} = W$, Proposition \ref{1.3} applied to
  the cone $\overline{C\cap W}$ of $W$ implies that $v_1,v_2$ are linearly dependent modulo  $\overline{C\cap W} \cap -\overline{C\cap W}$. On the other
  hand, $\overline{C\cap W} \subseteq C^{\ddagger}$, so $\overline{C\cap W} \cap -\overline{C\cap W} \subseteq C^{\ddagger}\cap -C^{\ddagger}$. This contradicts
  the assumption that $v_1,v_2$ are linearly independent modulo $C^{\ddagger}\cap -C^{\ddagger}$.
\end{proof}

\section{Closures of Quadratic Modules}

We introduce basic terminology, also see \cite{M} or \cite{Pr}.
Let $A$ be a commutative ring with $1$. For the rest of this work
we assume $\frac12\in A$. For $f_1,\dots,f_t \in A$,
$(f_1,\dots,f_t)$ denotes the ideal of $A$ generated by
$f_1,\dots,f_t$. For any prime ideal $\frak{p}$ of $A$,
$\kappa(\frak{p})$ denotes the residue field of $A$ at $\frak{p}$,
i.e., $\kappa(\frak{p}) $ is the field of fractions of the
integral domain $\frac{A}{\frak{p}}$. 
We denote by $\dim(A)$ the krull dimension of the ring $A$.

A \it quadratic module \rm of $A$ is a subset $Q$ of $A$
satisfying $Q+Q\subseteq Q$, $f^2Q\subseteq Q$ for all $f\in A$
and $1\in Q$.
If $Q$ is a quadratic module of $A$, then
$Q\cap -Q$ in an ideal of $A$ (since $\frac12\in A$). $Q\cap -Q$
is referred to as the \it support \rm of $Q$. The quadratic module
$Q$ is said to be \it proper \rm if $Q\neq A$. Since $\frac12\in
A$, this is equivalent to $-1 \notin Q$ (using the identity
$a=(\frac{a+1}{2})^2- (\frac{a-1}{2})^2$). A \it semiordering \rm
of $A$ is a quadratic module $Q$ of $A$ satisfying $Q\cup -Q = A$
and $Q\cap -Q$ is a prime ideal of $A$. A \it preordering \rm
(resp., \it ordering\rm) of $A$ is a quadratic module (resp.,
semiordering) of $A$ which is closed under multiplication. $\sum
A^2$ denotes the set of (finite) sums of squares of elements of
$A$.

We assume always that our ring $A$ is an $\Bbb{R}$-algebra. Then
$A$ comes equipped with the topology described in Section 1. Any
quadratic module $Q$ of $A$ is a cone, so $Q^{\ddagger}$ and
$\overline{Q}$ are cones. But actually, if $Q$ is a quadratic
module (resp., preordering) of $A$, then $Q^{\ddagger}$ and
$\overline{Q}$ is a quadratic module (resp. preordering) of $A$.
For $Q^\ddagger$ this is easy to see, for $\overline{Q}$ it is
proven as in \cite[Lemma 1]{CMN}.

In case $A$ is finitely generated, say  $x_1,\dots,x_n$ generate
$A$ as an $\Bbb{R}$-algebra, then the set of monomials
$x_1^{d_1}\cdots x_n^{d_n}$ is countable and generates $A$ as a
vector space over $\Bbb{R}$. In that case, the multiplication of
$A$ is continuous. This is another way to prove that closures of
quadratic modules (preorderings) are again quadratic modules
(preorderings) in that case. We denote the polynomial ring
$\Bbb{R}[x_1,\dots,x_n]$ by $\Bbb{R}[\underline{x}]$ for short.

A  quadratic module $Q$ is said to be \it
archimedean \rm if for every $f\in A$ there is an integer $k\ge 0$
such that $k+f\in Q$.

\begin{prop} \label{2.0}
For any quadratic module $Q$ of $A$, the following are equivalent:
\begin{enumerate}
\item $Q$ is archimedean,
\item $1$ belongs to the interior of $Q$,
\item $Q$ has non-empty interior.
\end{enumerate}
\end{prop}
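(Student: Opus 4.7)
The plan is to prove the cycle (1) $\Rightarrow$ (2) $\Rightarrow$ (3) $\Rightarrow$ (1), in which (2) $\Rightarrow$ (3) is immediate and the two substantive directions are (1) $\Rightarrow$ (2), a scaling argument using the algebraic interior characterisation from Proposition \ref{x}(1), and (3) $\Rightarrow$ (1), which I route through a proof that $1 \in \operatorname{int}(Q)$ using the dual-cone criterion of Proposition \ref{y}.

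For (1) $\Rightarrow$ (2), I would verify that $1$ is an algebraic interior point of $Q$, hence an interior point by Proposition \ref{x}(1). Given $w \in A$, archimedeanness yields an integer $k \ge 1$ with $k + w \in Q$ (replacing $k$ by $k+1$ if the integer produced is $0$, using $1 \in Q$); then $\epsilon := 1/k$ works, since $1 + \epsilon w = \frac{1}{k}(k+w)$ and $Q$ is closed under multiplication by the nonnegative real $1/k$.

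The crucial implication is (3) $\Rightarrow$ (2). Pick $q \in \operatorname{int}(Q)$; by Proposition \ref{y} applied to $v=q$, every nonzero $L \in Q^\vee$ satisfies $L(q) > 0$. By the same proposition with $v = 1$, it then suffices to show that $L(1) > 0$ for every nonzero $L \in Q^\vee$. Since $1 \in Q$ we have $L(1) \ge 0$; assume for contradiction that $L(1) = 0$ for some nonzero $L \in Q^\vee$. For every $f \in A$ and every $t \in \mathbb{R}$, the square $(t+f)^2$ lies in $\sum A^2 \subseteq Q$, so
\[
0 \le L((t+f)^2) = t^2 L(1) + 2t L(f) + L(f^2) = 2t L(f) + L(f^2),
\]
an affine function of $t$ that is nonnegative for all real $t$. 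This forces the slope $2L(f)$ to vanish, so $L(f) = 0$; as $f$ was arbitrary, $L = 0$, a contradiction. Hence $1 \in \operatorname{int}(Q)$.

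To close the cycle via (2) $\Rightarrow$ (1): given $f \in A$, algebraic interiority of $1$ produces $\epsilon > 0$ with $1 + \epsilon f \in Q$, and scaling by $1/\epsilon$ gives $1/\epsilon + f \in Q$; for any integer $k \ge 1/\epsilon$ we then have $k + f = (k - 1/\epsilon) + (1/\epsilon + f) \in Q$, since $k - 1/\epsilon \ge 0$ is a nonnegative real and so lies in $Q$. I expect the duality step in (3) $\Rightarrow$ (2) to be the genuine obstacle: a purely primal approach that tries to scale an arbitrary interior point $q$ so as to dominate $f$ keeps requiring an integer upper bound for $q$ in $Q$, which is exactly the archimedean conclusion one is trying to prove. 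Proposition \ref{y} dissolves this circularity because the linear-in-$t$ expansion of $(t+f)^2$ uses only that $L$ annihilates $1$ and is nonnegative on squares.
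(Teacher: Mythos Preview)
Your proof is correct and follows essentially the same route as the paper: the equivalence (1) $\Leftrightarrow$ (2) via Proposition \ref{x}(1), and (3) $\Rightarrow$ (2) via Proposition \ref{y} by showing $L(1)>0$ for every nonzero $L \in Q^\vee$. The paper phrases the latter step as the Cauchy--Schwarz inequality $L(a)^2 \le L(1)L(a^2)$, whereas you unpack it directly via the expansion of $L((t+f)^2)$; these are the same argument.
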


\begin{proof} Clearly, $Q$ is archimedean iff $1$ is
an algebraic interior point of $Q$, hence (1) $\Leftrightarrow$ (2)
follows from the first assertion of Proposition \ref{x}.
It remains to show (3) $\Rightarrow$ (2).
Every functional $L \in Q^{\vee}$ satisfies the
Cauchy-Schwartz inequality, $L(a)^2 \le L(1)L(a^2)$.
If follows that every nonzero $L \in Q^{\vee}$
satisfies $L(1)>0$. Since $Q$ has non-empty interior,
it follows by Proposition \ref{y} that
$1$ is an interior point of $Q$.
\end{proof}

The simplest example of a non-archimedean quadratic module
is the quadratic module $Q=\sum \RR[x]^2$ of the algebra $A=\RR[x]$. By Proposition \ref{2.0}, $1$ is not an interior point of $Q$ and, by its proof, $L(1)>0$ for
every nonzero $L \in Q^{\vee}$. So, the
implication (3) $\Rightarrow$ (1) of Proposition \ref{y}
is not valid in general.

\begin{prop}\label{2.1} Let $Q$ be a semiordering of $A$. If $Q$ is not
archi\-medean then $Q^{\ddagger} = \overline{Q} = A$. If $Q$
is archimedean, then there exists a unique ring homomorphism
$\alpha : A \rightarrow \Bbb{R}$ with $Q \subseteq
\alpha^{-1}(\Bbb{R}^+)$, and $Q^{\ddagger} =\overline{Q} =
\alpha^{-1}(\Bbb{R}^+)$.
\end{prop}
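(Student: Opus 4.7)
The plan is to reduce the question to a dichotomy on $\dim_{\R} A/(\overline{Q}\cap -\overline{Q})$. Since $Q$ is a semiordering, $Q\cup -Q=A$, so $\overline{Q}\cup -\overline{Q}=A$; Corollary \ref{1.4} then gives $\overline{Q}=Q^{\ddagger}$, and $\overline{Q}$ is closed. Applying Proposition \ref{1.3} to the closed cone $\overline{Q}\subseteq A$, the $\R$-vector space $A/(\overline{Q}\cap -\overline{Q})$ has dimension $0$ or $1$. I will dispose of the two cases in turn and then match them with the archimedean dichotomy.

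If the dimension is $0$, then $\overline{Q}\cap -\overline{Q}=A$, so $-1\in \overline{Q}$; the identity $a=\bigl(\tfrac{a+1}{2}\bigr)^2+(-1)\bigl(\tfrac{a-1}{2}\bigr)^2$ (valid because $\tfrac12\in A$) forces $\overline{Q}=A$. If the dimension is $1$, let $\mathfrak{m}:=\overline{Q}\cap -\overline{Q}$; this is the support of the quadratic module $\overline{Q}$, hence an ideal of $A$, and of $\R$-codimension one, so it is a maximal ideal whose residue field is identified with $\R$. The quotient map $\alpha:A\to \R$ is a ring homomorphism, and $\overline{Q}/\mathfrak{m}$ is a cone in $\R$ whose union with its negative is all of $\R$ and whose intersection with its negative is $\{0\}$; so $\overline{Q}/\mathfrak{m}\in \{\R^+,\R^-\}$, and $\alpha(1)=1>0$ forces $\overline{Q}/\mathfrak{m}=\R^+$, i.e., $\overline{Q}=\alpha^{-1}(\R^+)$.

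It remains to match the two cases with the archimedean hypothesis and to establish uniqueness of $\alpha$. Assume first that $Q$ is archimedean. By Proposition \ref{2.0}, $1$ is then an interior point of $Q$; if we were in the $0$-dimensional case, $-1\in \overline{Q}$, and Proposition \ref{x}(2) applied with $q=1$, $v=-1$, $\epsilon=\tfrac12$ would put $-\tfrac12\in \operatorname{int}(Q)\subseteq Q$, hence $-1=(\sqrt{2})^2(-\tfrac12)\in Q$, contradicting that the semiordering $Q$ is proper. Conversely, in the $1$-dimensional case and for any $f\in A$, some $k\ge 0$ must satisfy $k+f\in Q$ --- otherwise $Q\cup -Q=A$ would give $-(k+f)\in Q\subseteq \alpha^{-1}(\R^+)$ for every $k$, forcing $\alpha(f)\le -k$ for all $k$, absurd --- so $Q$ is archimedean. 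For uniqueness, any ring homomorphism $\beta:A\to \R$ fixes $\R$ pointwise and is therefore $\R$-linear, hence continuous, so $\beta^{-1}(\R^+)$ is closed; $Q\subseteq \beta^{-1}(\R^+)$ then implies $\alpha^{-1}(\R^+)=\overline{Q}\subseteq \beta^{-1}(\R^+)$, and substituting $\pm(f-\alpha(f)+\epsilon)$ for each $\epsilon>0$ and letting $\epsilon\downarrow 0$ forces $\beta(f)=\alpha(f)$. The main obstacle I anticipate is this alignment step: ruling out $\overline{Q}=A$ when $Q$ is archimedean requires Proposition \ref{2.0} together with the quadratic module's closure under multiplication by squares in order to amplify $-\tfrac12\in Q$ into the contradiction $-1\in Q$.
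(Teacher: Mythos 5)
Your proof is correct, but it takes a genuinely different route from the paper's. The paper handles the non-archimedean case by directly exhibiting $-1$ as a sequential limit (from $k+q\notin Q$ for all $k>0$ one gets $-1-\tfrac1k q\in Q$, so $-1\in Q^{\ddagger}$), and in the archimedean case it imports the representation theorem \cite[Theorem 5.2.5]{M} to obtain $\alpha$ and \cite[Lemma 5.2.6]{M} for uniqueness, then checks $\alpha^{-1}(\Bbb{R}^+)\subseteq Q^{\ddagger}$ by hand. You instead run everything through the Section~1 dichotomy: Corollary \ref{1.4} gives $Q^{\ddagger}=\overline{Q}$ closed, Proposition \ref{1.3} forces $\dim A/(\overline{Q}\cap-\overline{Q})\in\{0,1\}$, and in the codimension-one case you \emph{construct} $\alpha$ as the quotient by the maximal ideal $\overline{Q}\cap-\overline{Q}$ rather than citing the representation theorem --- the paper only hints at this with its parenthetical ``(This can also be deduced from Proposition \ref{1.3}.)'', and only for the final equality. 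What your route buys is self-containedness (no appeal to \cite[Theorem 5.2.5]{M} or \cite[Lemma 5.2.6]{M}; in effect you re-derive the existence of the representing homomorphism for archimedean semiorderings from the separation theorem alone); what it costs is that the alignment of the two cases with the archimedean dichotomy, which is immediate in the paper's argument, requires your extra steps via Propositions \ref{2.0} and \ref{x}, and the paper's version additionally displays the explicit sequence witnessing $-1\in Q^{\ddagger}$. All the individual steps check out: the identification $\overline{Q}/\mathfrak{m}=\Bbb{R}^+$, the amplification of $-\tfrac12\in Q$ to $-1\in Q$ via multiplication by the square of $\sqrt2\in\Bbb{R}\subseteq A$, and the $\epsilon$-squeeze for uniqueness (which implicitly and legitimately uses that a unital ring homomorphism $A\to\Bbb{R}$ is $\Bbb{R}$-linear, hence continuous).
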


\begin{proof} If $Q$ is not archimedean there exists $q\in A$ with $k+q \notin Q$ for all real $k>0$. Then $-k-q \in Q$, i.e.,
 $-1-\frac{1}{k}q \in Q$, for all real $k>0$. This proves $-1 \in Q^{\ddagger}$, so $Q^{\ddagger}=\overline{Q} = A$.

Suppose $Q$ is archimedean. According to \cite[Theorem 5.2.5]{M}
there exists a ring homomorphism $\alpha : A \rightarrow \Bbb{R}$
such that $Q \subseteq \alpha^{-1}(\Bbb{R}^+)$. $\alpha$ is linear
so $\alpha^{-1}(\Bbb{R}^+)$ is closed, so $\overline{Q} \subseteq
\alpha^{-1}(\Bbb{R}^+)$. If $f \in \alpha^{-1}(\Bbb{R}^+)$ then
for any real $\epsilon >0$, $\alpha(f+\epsilon) >0$ so $f+\epsilon
\in Q$. (If $f+\epsilon \notin Q$ then $-(f+\epsilon) \in Q$ so
$-(f+\epsilon) \in \alpha^{-1}(\Bbb{R}^+)$, which contradicts our
assumption.)  It follows that $Q^{\ddagger}=\overline{Q} =
\alpha^{-1}(\Bbb{R}^+)$. (This can also be deduced from
Proposition \ref{1.3}.) Uniqueness of $\alpha$ is for example
\cite[Lemma 5.2.6]{M}.
\end{proof}

\begin{prop}\label{2.2} Let $A$ be finitely generated. For any set of semiorderings $\mathcal{Y}$ of $A$,
$$(\cap_{Q\in \mathcal{Y}}Q)^{\ddagger}=\overline{\cap_{Q\in \mathcal{Y}} Q} = \cap_{Q\in \mathcal{Y}} \overline{Q}.$$
\end{prop}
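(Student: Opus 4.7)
I plan to prove the three-way equality via the chain $(\bigcap_{Q \in \mathcal{Y}} Q)^{\ddagger} \subseteq \overline{\bigcap_{Q \in \mathcal{Y}} Q} \subseteq \bigcap_{Q \in \mathcal{Y}} \overline{Q} \subseteq (\bigcap_{Q \in \mathcal{Y}} Q)^{\ddagger}$. The first inclusion is immediate from the definition of $^{\ddagger}$, and the second follows from the monotonicity of closure (each $\overline{Q}$ is closed in $A$ and contains $\bigcap_Q Q$); the content lies in the third inclusion $\bigcap_Q \overline{Q} \subseteq (\bigcap_Q Q)^{\ddagger}$. I will fix an arbitrary $f \in \bigcap_Q \overline{Q}$ and apply Corollary \ref{1.2} to reduce the task to producing a single element $q \in A$ with $f + \epsilon q \in Q$ for every $Q \in \mathcal{Y}$ and every real $\epsilon > 0$. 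Concretely, fixing generators $x_1, \dots, x_n$ of $A$, I set $h := 1 + x_1^2 + \cdots + x_n^2$ and take $q := h^k$ for $k$ chosen strictly larger than $\deg f$ (viewing $f$ as a polynomial in the generators).

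For an archimedean $Q$, I use Proposition \ref{2.1} to get the ring homomorphism $\alpha = \alpha_Q : A \to \mathbb{R}$ with $\overline{Q} = \alpha^{-1}(\mathbb{R}^+)$; then $\alpha(f) \ge 0$ and $\alpha(h^k) \ge 1$, so $\alpha(f + \epsilon h^k) > 0$. The $Q \cup -Q = A$ argument from the proof of Proposition \ref{2.1} (if $-(f + \epsilon h^k) \in Q$, then $\alpha$ would be non-negative on it, contradicting positivity) then forces $f + \epsilon h^k \in Q$. Any $k \ge 0$ works in this case.

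For a non-archimedean $Q$, the hypothesis $f \in \overline{Q} = A$ is vacuous, so I plan to exploit the structure of $Q$ itself. By Zorn's lemma I extend $Q$ to a maximal (proper) semiordering, which is an ordering, and in turn extend this ordering to an evaluation $\phi : A \to R$ in a real closed field $R$, yielding $Q \subseteq \phi^{-1}(R_{\ge 0})$. Because $Q \cup -Q = A$, any element with strictly positive $\phi$-value must belong to $Q$. Non-archimedeanness of $Q$ makes $\phi$ non-archimedean, so some $|\phi(x_i)|$ is infinite in $R$. A degree estimate gives $\phi(h^k) \ge (\max_i |\phi(x_i)|)^{2k}$ while $|\phi(f)| \le C (\max_i |\phi(x_i)| + 1)^{\deg f}$ for some constant $C$ depending on $f$, so for $k > \deg f$ the ratio $\phi(h^k)/|\phi(f)|$ is infinite in $R$ and $\phi(h^k + M f) > 0$ for every real $M > 0$. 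Hence $h^k + M f \in Q$, which rephrased is $f + \epsilon h^k \in Q$ with $\epsilon = 1/M$.

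Since the cut-off $k > \deg f$ depends only on $f$, the single choice $q = h^k$ serves every $Q \in \mathcal{Y}$ simultaneously. I anticipate the main obstacle in the non-archimedean step: extracting a usable real-closed-field evaluation from a general semiordering (which, lacking multiplicative closure, must be extended first to an enclosing ordering via the Zorn argument) and then rigorously verifying the magnitude comparison $\phi(h^k) \gg |\phi(f)|$ inside $R$.
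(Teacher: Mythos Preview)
Your strategy coincides with the paper's: both fix generators $x_1,\dots,x_n$, set $h=1+\sum x_i^2$, choose an exponent depending only on $\deg f$, and show $f+\epsilon h^k\in Q$ for every $Q\in\mathcal{Y}$ and every real $\epsilon>0$. Your archimedean case is essentially the paper's ``Case 2''. The problem is your non-archimedean case.

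The assertion that a maximal proper semiordering is an ordering is not correct, and this is precisely the obstacle you flagged at the end of your plan. On a field $K$, \emph{every} semiordering is already a maximal proper quadratic module: if $Q\subsetneq Q'$ with $Q'$ proper, pick $a\in Q'\setminus Q$; then $-a\in Q\subseteq Q'$, so $a\in Q'\cap -Q'=\{0\}$, a contradiction. Yet $\Bbb{R}(x,y)$ carries semiorderings that are not orderings. So the Zorn step hands you back a semiordering, not an ordering, and in general there is no ring homomorphism $\phi:A\to R$ into a real closed field with $Q\subseteq\phi^{-1}(R_{\ge 0})$; without such a $\phi$ your magnitude comparison in $R$ cannot even be formulated. (Whether for a specific non-archimedean $Q$ on a finitely generated $A$ some enclosing ordering happens to exist is a delicate specialization question you have not addressed.)

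The paper sidesteps this by using, instead of an enclosing ordering, the \emph{natural valuation} $v$ on $\kappa(\frak{p})$ (where $\frak{p}=Q\cap -Q$) that is canonically attached to the semiordering itself; this exists for any semiordering and has residue field $\Bbb{R}$. Non-archimedeanness of $Q$ is exactly the condition $v(x_i+\frak{p})<0$ for some $i$, and compatibility of $v$ with $Q$ gives $v(h+\frak{p})=2\min_i v(x_i+\frak{p})$. Your intended degree comparison then becomes the valuation inequality $v(h^k+\frak{p})=2k\min_i v(x_i+\frak{p})<(\deg f)\min_i v(x_i+\frak{p})\le v(f+\frak{p})$, which forces the sign of $f+\epsilon h^k$ at $Q$ to agree with that of $h^k$, hence $f+\epsilon h^k\in Q$. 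This is exactly your ``$h^k$ dominates $f$'' idea, carried out with the tool that is available for semiorderings rather than for orderings.
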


\begin{proof} Suppose  $f \in \cap_{Q\in \mathcal{Y}} \overline{Q}$.
Fix generators $x_1,\dots,x_n$ of $A$ as an $\Bbb{R}$-algebra
and let $d$ denote the degree of $f$ viewed as a polynomial in
$x_1,\dots,x_n$ with coefficients in $\Bbb{R}$. Let $g =
1+\sum_{i=1}^n x_i^2$ and fix an integer $e$ with $2e>d$. We claim
that for any real $\epsilon >0$ and any $Q \in \mathcal{Y}$,
$f+\epsilon g^e \in Q$. This will prove that $f+\epsilon g^e \in
\cap_{Q\in \mathcal{Y}} Q$ for any real $\epsilon >0$, so $f\in
(\cap_{Q\in \mathcal{Y}} Q)^{\ddagger}$, which will complete the
proof. Let $\frak{p} := Q\cap -Q$, let $Q'$ denote the extension
of $Q$ to the residue field $\kappa(\frak{p})$, and let $v$ denote
the natural valuation of $\kappa(\frak{p})$ associated to $Q'$
(e.g., see \cite[Theorem 5.3.3]{M}). To prove the claim we
consider two cases. Suppose first that $v(x_i+\frak{p})<0$ for
some $i$. Reindexing we may suppose $v(x_1+\frak{p}) \le
v(x_i+\frak{p})$ for all $i$. Then $v(g^e+\frak{p}) =
ev(g+\frak{p}) = 2ev(x_1+\frak{p}) < dv(x_1+\frak{p}) \le
v(f+\frak{p})$. It follows that the sign of $f+\epsilon g^e$ at
$Q$ is the same as the sign of $g^e$ at $Q$ in this case, i.e.,
$f+\epsilon g^e \in Q$. In the remaining case $v(x_i+\frak{p}) \ge
0$ for all $i$ so $\frac{A}{\frak{p}}$ is a subring of the
valuation ring $B_v$ in this case. Since the residue field of $v$
is $\Bbb{R}$, we have a ring homomorphism $\alpha : A \rightarrow
\Bbb{R}$ defined by the composition $A \rightarrow
\frac{A}{\frak{p}} \subseteq B_v \rightarrow \Bbb{R}$. Then
$\overline{Q} \subseteq \alpha^{-1}(\Bbb{R}^+)$ so $\alpha(f)\ge
0$ and $\alpha(f+\epsilon g^e)>0$. This implies that $f+\epsilon
g^e \in Q$ also holds in this case.
\end{proof}

We assume always that $M$ is a quadratic module of $A$. For some
results we need that $M$ and/or $A$ are finitely generated, some
results hold in general. Let $\mathcal{Y}_M$ denote the set of all
semiorderings of $A$ containing $M$, $\mathcal{X}_M$ the set of
all orderings of $A$ containing $M$ and $\mathcal{K}_M$ the set of
geometric points of $\mathcal{X}_M$, i.e., the orderings of $A$
having the form $\alpha^{-1}(\Bbb{R}^+)$ for some ring
homomorphism $\alpha: A \rightarrow \Bbb{R}$ with $M\subseteq
\alpha^{-1}(\Bbb{R}^+)$. For any set of semiorderings
$\mathcal{Y}$ of $A$, define $\operatorname{Pos}(\mathcal{Y}) :=
\cap_{Q\in \mathcal{Y}}Q$, i.e., $\operatorname{Pos}(\mathcal{Y})
:= \{ f \in A \mid f\ge 0 \text{ on } \mathcal{Y}\}$. Since
$\mathcal{K}_M \subseteq \mathcal{X}_M \subseteq \mathcal{Y}_M$ it
follows that
\begin{equation}\label{eqn1}\operatorname{Pos}(\mathcal{K}_M) \supseteq
\operatorname{Pos}(\mathcal{X}_M) \supseteq
\operatorname{Pos}(\mathcal{Y}_M)\supseteq M.
\end{equation}

\begin{prop}\label{2.3} Let $A$ be finitely generated, $M$ an  arbitrary quadratic module in $A$. Then
$$\operatorname{Pos}(\mathcal{K}_M)= \operatorname{Pos}(\mathcal{Y}_M)^{\ddagger}=
\overline{\operatorname{Pos}(\mathcal{Y}_M)}.$$
\end{prop}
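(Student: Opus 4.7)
The plan is to combine Proposition \ref{2.2} (applied to $\mathcal{Y}=\mathcal{Y}_M$) with the description of $\overline{Q}$ for semiorderings given by Proposition \ref{2.1}. Since Proposition \ref{2.2} already handles the equality $\operatorname{Pos}(\mathcal{Y}_M)^{\ddagger}=\overline{\operatorname{Pos}(\mathcal{Y}_M)}$ and expresses this common set as $\bigcap_{Q\in\mathcal{Y}_M}\overline{Q}$, the only remaining task is to show
$$\bigcap_{Q\in\mathcal{Y}_M}\overline{Q}=\operatorname{Pos}(\mathcal{K}_M).$$

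First I would split the intersection according to whether $Q\in\mathcal{Y}_M$ is archimedean or not. By Proposition \ref{2.1}, each non-archimedean $Q$ contributes $\overline{Q}=A$, which is absorbed by the intersection; so
$$\bigcap_{Q\in\mathcal{Y}_M}\overline{Q}=\bigcap_{Q\in\mathcal{Y}_M,\ Q\text{ archimedean}}\overline{Q}=\bigcap_{Q\in\mathcal{Y}_M,\ Q\text{ archimedean}}\alpha_Q^{-1}(\Bbb R^+),$$
where $\alpha_Q:A\to\Bbb R$ is the unique ring homomorphism attached to $Q$ by Proposition \ref{2.1}.

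Next I would verify that the collection of orderings $\{\alpha_Q^{-1}(\Bbb R^+):Q\in\mathcal{Y}_M,\ Q\text{ archimedean}\}$ is exactly $\mathcal{K}_M$. For the inclusion $\subseteq$, if $Q$ is an archimedean semiordering containing $M$, then $M\subseteq Q\subseteq\alpha_Q^{-1}(\Bbb R^+)$, so $\alpha_Q^{-1}(\Bbb R^+)\in\mathcal{K}_M$ by definition. For $\supseteq$, given any $\alpha:A\to\Bbb R$ with $M\subseteq\alpha^{-1}(\Bbb R^+)$, the set $P:=\alpha^{-1}(\Bbb R^+)$ is itself an ordering, hence a semiordering, lying over $M$; it is archimedean because for every $f\in A$ the real number $\alpha(f)$ is bounded by some integer $k$, so $k+f\in P$; and by the uniqueness clause of Proposition \ref{2.1}, $\alpha_P=\alpha$. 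Hence $\alpha^{-1}(\Bbb R^+)$ is a member of the collection. Combining these observations gives $\bigcap_{Q\in\mathcal{Y}_M}\overline{Q}=\bigcap_{P\in\mathcal{K}_M}P=\operatorname{Pos}(\mathcal{K}_M)$, which completes the proof.

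The main step to be careful about is the identification of archimedean semiorderings over $M$ with points of $\mathcal{K}_M$; this is where finite generation of $A$ enters indirectly, via the use of Proposition \ref{2.2} (whose proof invokes the valuation-theoretic argument). Two trivial edge cases (when $\mathcal{K}_M$ or $\mathcal{Y}_M$ is empty, so both sides equal $A$) should also be noted in passing.
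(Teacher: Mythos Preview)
Your argument is correct and follows exactly the route the paper intends: the paper's proof is the single line ``Immediate from Proposition \ref{2.1} and \ref{2.2}'', and you have simply unpacked what that means, namely applying Proposition \ref{2.2} with $\mathcal{Y}=\mathcal{Y}_M$ and then using Proposition \ref{2.1} to identify $\bigcap_{Q\in\mathcal{Y}_M}\overline{Q}$ with $\operatorname{Pos}(\mathcal{K}_M)$. One small expository remark: in your closing paragraph, finite generation of $A$ is needed only for Proposition \ref{2.2}, not for the identification of archimedean semiorderings with points of $\mathcal{K}_M$ (Proposition \ref{2.1} holds for arbitrary $A$).
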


\begin{proof} Immediate from Proposition \ref{2.1} and \ref{2.2}.
\end{proof}

One can improve on (\ref{eqn1}) and Proposition \ref{2.3} in important 
 cases:

\begin{prop}\label{2.4} \
\begin{enumerate}
\item If $A$ and $M$ are finitely generated, then  $\operatorname{Pos}(\mathcal{K}_M) =
\operatorname{Pos}(\mathcal{X}_M).$
\item If either $M$ is a preordering in $A$, or $A$ is finitely generated and $\dim(\frac{A}{M\cap-M}) \le 1$,
then $\operatorname{Pos}(\mathcal{X}_M) = \operatorname{Pos}(\mathcal{Y}_M)$.
\end{enumerate}

\end{prop}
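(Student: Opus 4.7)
The plan addresses the two assertions of Proposition \ref{2.4} separately; in each case the chain $\operatorname{Pos}(\mathcal{K}_M)\supseteq \operatorname{Pos}(\mathcal{X}_M)\supseteq \operatorname{Pos}(\mathcal{Y}_M)$ of (\ref{eqn1}) is free, so only the reverse containments require work.

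For (1), I would first reduce to a preordering. Letting $T_M$ be the preordering of $A$ generated (as a preordering) by the quadratic-module generators of $M$, one has $\mathcal{X}_M=\mathcal{X}_{T_M}$ and $\mathcal{K}_M=\mathcal{K}_{T_M}$, since orderings and $\mathbb{R}$-valued ring homomorphisms are multiplicatively closed and hence contain $M$ iff they contain $T_M$. Because $A$ and $M$ are finitely generated, $T_M$ is a finitely generated preordering, so the Krivine--Stengle Positivstellensatz applies: both ``$f\ge 0$ on $\mathcal{K}_{T_M}$'' and ``$f\ge 0$ on $\mathcal{X}_{T_M}$'' are characterized by the same algebraic identity $pf=f^{2k}+q$ with $p,q\in T_M$ and $k\ge 0$. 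Equivalently, one reads the conclusion off directly: in any $P\in\mathcal{X}_M$ with $f\notin P$ one would have $-f,p,q,f^{2k}\ge_P 0$, giving $pf\le_P 0$ and $f^{2k}+q\ge_P 0$, and equality forces $f^{2k}\in P\cap -P$, hence $f\in P\cap -P\subseteq P$, a contradiction.

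For (2), fix $f\in\operatorname{Pos}(\mathcal{X}_M)$ and suppose toward contradiction that $f\notin Q$ for some $Q\in\mathcal{Y}_M$. Set $\mathfrak{p}=Q\cap -Q$ and let $\bar M,\bar Q,\bar f$ be the images in $\kappa(\mathfrak{p})$; then $-\bar f\in\bar Q$, $\bar f\ne 0$, and $\bar M$ is still proper (otherwise $-1=\bar m$ for some $m\in M$, so $m+1\in\mathfrak{p}\subseteq -Q$ yields $-1=(-m-1)+m\in Q$, contradicting properness of $Q$). In the preordering alternative, $\bar M$ is a proper preordering of the field $\kappa(\mathfrak{p})$, and I would show that $\bar M-\bar f\bar M$ is again a proper preordering: if $-1=t_1-\bar f t_2$ with $t_i\in\bar M$ and $t_2\ne 0$, then $t_2^{-1}=t_2\cdot(t_2^{-1})^2\in\bar M$ (since $\bar M$ contains all sums of squares and is multiplicatively closed), forcing $\bar f=(1+t_1)t_2^{-1}\in\bar M\subseteq\bar Q$ and contradicting $\bar f\notin\bar Q$; the case $t_2=0$ gives $-1\in\bar M$ immediately. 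Zorn's lemma then extends $\bar M-\bar f\bar M$ to an ordering $\bar P$ of $\kappa(\mathfrak{p})$, whose pullback $P:=\{a\in A:\bar a\in\bar P\}$ is an ordering of $A$ containing $M$ and $-f$ but not $f$, contradicting $f\in\operatorname{Pos}(\mathcal{X}_M)$.

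In the alternative ``$A$ finitely generated and $\dim(A/(M\cap -M))\le 1$'', the inclusion $M\cap -M\subseteq\mathfrak{p}$ forces $\dim(A/\mathfrak{p})\le 1$, so $\kappa(\mathfrak{p})$ is a finitely generated extension of $\mathbb{R}$ of transcendence degree at most one. The plan is to invoke the classical SAP theorem (Prestel), which says every semiordering of such a field is already an ordering; then $\bar Q$ is an ordering of $\kappa(\mathfrak{p})$, its pullback $Q$ is an ordering of $A$ containing $M$, so $Q\in\mathcal{X}_M$ contradicts $f\notin Q$. The main obstacles are twofold: invoking Krivine--Stengle in the correct generality for finitely generated $\mathbb{R}$-algebras (rather than just in $\mathbb{R}[\underline x]$) for part (1), and citing the SAP theorem for function fields of curves over $\mathbb{R}$ in the dimension-one subcase of (2). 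The preordering subcase of (2) is elementary in comparison, but really does require the passage to the residue field in order to invert $t_2$.
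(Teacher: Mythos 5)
Your proposal is correct and follows essentially the same route as the paper: part (1) rests on the Tarski transfer between $\mathcal{K}_M$ and $\mathcal{X}_M$ for finitely generated preorderings (you package this as the Krivine--Stengle Positivstellensatz, the paper cites Tarski's Transfer Principle directly), and part (2) passes to the residue field $\kappa(\mathfrak{p})$ of the support of a semiordering $Q\in\mathcal{Y}_M$, using Artin--Schreier theory in the preordering case (you unwind it via the proper preordering $\bar M-\bar f\bar M$ and Zorn's lemma, the paper quotes ``a proper preordering of a field is the intersection of the orderings containing it'') and the SAP theorem for fields of transcendence degree $\le 1$ in the low-dimensional case (the paper's citation of \cite[Theorem 7.4.1]{M}). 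Your extra care with the properness of $\bar M$ and the compatibility of pullbacks with supports fills in details the paper leaves implicit, but introduces no new idea.
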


\begin{proof} (1) is immediate from Tarski's Transfer Principle.

(2) If $\dim(\frac{A}{M\cap-M}) \le 1$ then every semiordering lying over $M$ is an ordering,
e.g., by \cite[Theorem 7.4.1]{M}, so the result is clear in this case. Suppose now that $M$ is a preordering,
 $f\ge 0$ on $\mathcal{X}_M$ and $Q \in \mathcal{Y}_M$. Let $\frak{p} := Q\cap -Q$, $M' :=$ the extension of $M$
 to $\kappa(\frak{p})$. $M'$ is a preordering of $\kappa(\frak{p})$ so it is the intersection of the orderings of
 $\kappa(\frak{p})$ lying over $M'$, by the Artin-Schreier Theorem \cite[Lemma 1.4.4]{M}. Since $f\ge 0$ on $\mathcal{X}_M$
 this forces  $f+\frak{p} \in M'$. Since $M'$ is a subset of the extension of $Q$ to $\kappa(\frak{p})$, this implies in turn that $f \in Q$.
\end{proof}

$\operatorname{Pos}(\mathcal{Y}_M)$ can also be described in other ways, which make no explicit mention of $\mathcal{Y}_M$:
\begin{align*} \operatorname{Pos}(\mathcal{Y}_M) =  \{ f \in A \mid & pf = f^{2m}+q \text{ for some } p \in \sum A^2, q \in M, m\ge 0\} \\ =
\{ f\in A \mid & f+\frak{p} \text{ belongs to the extension of } M \text{ to } \kappa(\frak{p}) \\ &\forall \text{ primes } \frak{p} \text{ of } A\}.\end{align*}
This is well-known and is a consequence of the abstract Positivstellensatz for semiorderings, e.g., see \cite{J} or \cite[Theorem 5.3.2]{M}.
Typically one uses ideas from quadratic form theory and valuation theory to decide when $f+\frak{p}$ lies in the extension of $M$ to $\kappa(\frak{p})$;
 see \cite{JP} and \cite{M}. Note that one needs only consider primes $\frak{p}$ satisfying $f\notin \frak{p}$ and $(M+\frak{p})\cap -(M+\frak{p}) = \frak{p}$.

We turn now to $\overline{M}$. One has the obvious commutative diagram:

\[
\xymatrix{ \overline{M} \ar@{->}
[r] & \operatorname{Pos}(\mathcal{K}_M) \\ M
\ar@{->}
[r] \ar@{->} [u] & \operatorname{Pos}(\mathcal{Y}_M), \ar@{->}
[u] }
\]
The arrows here denote inclusions.
Interest in $\overline{M}$ stems from the Moment Problem:

\begin{prop}\label{2.5} Let $A$ be finitely generated and $M$ an arbitrary quadratic module of $A$. Then the following are equivalent:
\begin{enumerate}
\item $\overline{M} = \operatorname{Pos}(\mathcal{K}_M)$.
\item For each $L \in M^{\vee}$ there exists a positive Borel measure $\mu$ on $\mathcal{K}_M$ such that $L(f) = \int f \, d\mu$ for all $f \in A$.
\end{enumerate}
\end{prop}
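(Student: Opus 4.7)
The plan is to prove the two implications separately: (2)$\Rightarrow$(1) is a Hahn--Banach separation argument, while (1)$\Rightarrow$(2) reduces to Haviland's theorem once we use the finite generation of $A$ to identify $\mathcal{K}_M$ with a closed subset of some $\RR^n$.

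For (2)$\Rightarrow$(1), the inclusion $\overline{M}\subseteq\operatorname{Pos}(\mathcal{K}_M)$ is recorded in the commutative diagram preceding the proposition: each $\alpha\in\mathcal{K}_M$ is a continuous linear functional on $A$, so $\operatorname{Pos}(\mathcal{K}_M)=\bigcap_{\alpha\in\mathcal{K}_M}\alpha^{-1}(\RR^+)$ is closed and contains $M$, hence contains $\overline{M}$. For the reverse inclusion I would argue by contradiction: if $f\in\operatorname{Pos}(\mathcal{K}_M)$ and $f\notin\overline{M}$, then, $\overline{M}$ being a closed convex cone, the Separation Theorem of Section 1 supplies a linear $L:A\to\RR$ with $L\geq 0$ on $\overline{M}$ (in particular $L\in M^\vee$) and $L(f)<0$. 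Hypothesis (2) then produces a positive Borel measure $\mu$ on $\mathcal{K}_M$ representing $L$, whence $L(f)=\int_{\mathcal{K}_M}f\,d\mu\geq 0$ because $f\geq 0$ on $\mathcal{K}_M$, contradicting $L(f)<0$.

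For (1)$\Rightarrow$(2), I would fix generators $x_1,\ldots,x_n$ of $A$ and let $\pi:\RR[\underline{x}]\to A$ be the induced surjection with kernel $I$. Every ring homomorphism $\alpha:A\to\RR$ is determined by the point $(\alpha(x_1),\ldots,\alpha(x_n))\in\RR^n$, so $\mathcal{K}_M$ is naturally a closed subset $K\subseteq\RR^n$. Given $L\in M^\vee$, the fact that $L$ is automatically continuous in the finest locally convex topology, combined with hypothesis (1), yields $L\geq 0$ on $\overline{M}=\operatorname{Pos}(\mathcal{K}_M)$; hence the pullback $\tilde L:=L\circ\pi$ is a linear functional on $\RR[\underline{x}]$ that is nonnegative on every polynomial which is nonnegative on $K$. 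Haviland's theorem (see \cite{M}) then supplies a positive Borel measure $\mu$ on $K$ with $\tilde L(p)=\int_K p\,d\mu$ for all $p\in\RR[\underline{x}]$. Since $\mu$ is concentrated on $K\cong\mathcal{K}_M$ and $L$ factors through $\pi$, viewing $\mu$ on $\mathcal{K}_M$ produces the representing measure required in (2).

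The only genuinely non-bookkeeping step is the invocation of Haviland's theorem in (1)$\Rightarrow$(2), and it is precisely this step that forces the finite-generation hypothesis on $A$: without it, one could not realize $\mathcal{K}_M$ as a closed subset of a Euclidean space and reduce to the classical polynomial moment problem.
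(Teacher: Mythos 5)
Your proof is correct and follows essentially the same route as the paper, whose one-line proof is exactly the combination of $\overline{M}=M^{\vee\vee}$ (the separation argument you spell out for (2)$\Rightarrow$(1)) with Haviland's theorem applied after pulling $L$ back to $\RR[\underline{x}]$ (your (1)$\Rightarrow$(2)). You have merely made explicit the details the paper leaves to the reader.
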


\begin{proof} This follows from Haviland's Theorem \cite[Theorem 3.1.2]{M}, using  $\overline{M}=M^{\vee\vee}$, as explained above.
\end{proof}

See \cite[Theorem 3.2.2]{M} for an extended version of Haviland's
Theorem. For arbitrary $A$ and $M$, we say $M$ satisfies the \it
strong moment property \rm (SMP) if condition (1) of Proposition
\ref{2.5} holds.

 In computing $\overline{M}$ it seems there are only two basic tools available, which are the following two theorems.

\begin{thm}\label{2.6} Let $A$ and $M$ be finitely generated. If $M$ is stable then
$M^{\ddagger}=\overline{M} = M+\sqrt{M\cap -M}$ and $\overline{M}$ is stable.
\end{thm}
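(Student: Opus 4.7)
The plan is to prove the chain of inclusions
\[
M + \sqrt{M\cap -M} \;\subseteq\; M^{\ddagger} \;\subseteq\; \overline{M} \;\subseteq\; M + \sqrt{M\cap -M},
\]
which forces all three sets to coincide, and then to verify that the resulting representation inherits stability. The middle inclusion is immediate from the definition of $M^\ddagger$.

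The first inclusion does not use stability; it suffices to show $\sqrt{M\cap -M}\subseteq M^\ddagger$. Given $f$ with $f^k\in M\cap -M$, the case $k=1$ is trivial. For $k\ge 2$, I would let $p_k(t) = \sum_{j=0}^{k-1}\binom{1/2}{j}t^j$ be the truncation at degree $k-1$ of the binomial series of $\sqrt{1+t}$; by construction $p_k(t)^2-(1+t)$ is of the form $\sum_{j=k}^{2k-2} d_j t^j$ for explicit real constants $d_j$. Substituting $t=f/\epsilon$ and multiplying through by $\epsilon$ yields in $A$ the identity
\[
\epsilon + f \;=\; \bigl(\sqrt{\epsilon}\,p_k(f/\epsilon)\bigr)^{\!2} \;-\; \sum_{j=k}^{2k-2} d_j\,\epsilon^{\,1-j}\,f^{j}.
\]
Since $M\cap -M$ is an ideal containing $f^k$, each $f^j$ with $j\ge k$ lies in $M\cap -M$, so the right-hand side is a square plus an element of $M$. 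Hence $\epsilon+f\in M$ for every $\epsilon>0$, and Corollary~\ref{1.2} gives $f\in M^\ddagger$.

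The third inclusion is where stability enters. My plan is to show that $M+J$ is closed in $A$ with $J:=\sqrt{M\cap -M}$; combined with $M\subseteq M+J$, this forces $\overline{M}\subseteq M+J$. The ideal $J$ is itself closed because every ideal of the finitely generated $\mathbb{R}$-algebra $A$ meets each finite-dimensional $A_d$ in a linear subspace. Pass to the quotient $\pi\colon A\to \bar A := A/J$: the image $\bar M := \pi(M)$ is a stable quadratic module whose support is trivial, since $M\cap -M\subseteq J$. It suffices to show $\bar M$ is closed in $\bar A$, for then $M+J=\pi^{-1}(\bar M)$ is closed in $A$. In each finite-dimensional slice $\bar A_d$, stability expresses $\bar M\cap \bar A_d$ as the image, under a continuous linear map between finite-dimensional spaces, of a product of closed pointed cones of sums of squares of bounded degree; the vanishing of $\bar M\cap -\bar M$ then rules out the asymptotic cancellations that would otherwise spoil closedness, so the image is closed.

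Stability of $\overline M = M+J$ finally follows by Noetherianity: $J$ is a finitely generated ideal, so any $f\in\overline M$ of degree $\le d$ admits a decomposition $f=h+g$ with $h\in M$ and $g\in J$, where stability of $M$ bounds $\deg h$ and a fixed set of generators of $J$ bounds $\deg g$, producing a uniform bound for $\overline M$. The main obstacle is the convex-analytic closedness step in $\bar A$: proving that the linear image of a product of closed pointed SOS cones is closed in $\bar A_d$. This depends crucially on the trivial support of $\bar M$, which is precisely the condition that can fail for $M$ itself and is therefore precisely what accounts for the gap between $M$ and $\overline M$.
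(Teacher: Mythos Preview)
The paper itself does not prove this theorem; it cites \cite{Sc} and \cite[Theorem~4.1.2]{M}. Your overall architecture matches those references: pass to $\bar A=A/J$ with $J=\sqrt{M\cap -M}$, argue that the image $\bar M$ is stable with trivial support, and show $\bar M$ is closed by a finite-dimensional cone argument. Your binomial-series proof of $\sqrt{M\cap -M}\subseteq M^\ddagger$ is correct and pleasant.

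The substantive gap is exactly where you flag it, and it has two layers. First, the implication ``$M\cap -M\subseteq J$, hence $\bar M\cap -\bar M=\{0\}$'' is not valid as written: one must show that $m_1+m_2\in J$ with $m_1,m_2\in M$ forces $m_1\in J$, which is a separate lemma (\cite[Lemma~3.9]{Sc} or \cite[Lemma~4.1.1]{M}, invoked later in this paper in the proof of Theorem~\ref{4.1}). Second, and more seriously, trivial support alone does not make the linear image of the product of SOS cones closed. From $\bar M\cap-\bar M=\{0\}$ one deduces only that $\sum_i\sigma_ig_i=0$ forces each $\sigma_ig_i=0$, so $\ker\Phi\cap C$ is a \emph{face} of $C$; but a closed pointed cone can have non-closed linear image even when the kernel meets it in a face --- project the $2\times 2$ PSD cone $\{(x,y,z):z^2\le xy,\ x,y\ge0\}$ onto the $(y,z)$-plane and you get $\{y>0\}\cup\{(0,0)\}$. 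What the cited proofs actually exploit is that $J$ is a \emph{real} radical ideal, so $\bar A$ is real-reduced and has enough $\Bbb{R}$-points; evaluating at suitably chosen points of $\mathcal{K}_{\bar M}$ then bounds each summand $\sigma_ig_i$ uniformly, and this is the missing mechanism that rules out the ``asymptotic cancellations.'' (Your final stability paragraph is also off: stability of $M$ cannot bound $\deg h$ in an arbitrary decomposition $f=h+g$; rather, one lifts a bounded-degree representation of $\bar f$ from $\bar M$ and then controls the remainder, which lies in $J$ and has bounded degree.)
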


See \cite{Sc} or \cite[Theorem 4.1.2]{M} for the proof of Theorem \ref{2.6}. Here, $\sqrt{M\cap -M}$ denotes the radical of the ideal $M\cap -M$. Recall:
 $M$ is said to be \it stable \rm \cite{N2} \cite{PS} \cite{Sc} if for each finite dimensional subspace $V$ of $A$ there exists a finite dimensional
  subspace $W$ of $A$ such that each $f\in M\cap V$ is expressible as $f = \sigma_0+\sigma_1g_1+\dots+\sigma_sg_s$ where $g_1,\dots,g_s$ are the fixed
  generators of $M$ and the $\sigma_i$ are sums of squares of elements of $W$. See \cite{M} for an equivalent definition.

Interest in stability arose in the search for examples where (SMP)
fails. The quadratic module $\sum \Bbb{R}[\underline{x}]^2$ of the
polynomial ring $\Bbb{R}[\underline{x}]$ is stable. Theorem
\ref{2.6} was proved first in this special case in \cite{BCJ}, and
the result was then used to show that $\sum
\Bbb{R}[\underline{x}]^2$ does not satisfy (SMP) if $n\ge 2$. More
recently, in \cite[Theorem 5.4]{Sc}, it is shown that if $M$ is
stable and $\dim(\mathcal{K}_M)\ge 2$ then $M$ does not satisfy
(SMP). See \cite{BCJ} \cite{KM} \cite{M} \cite{N2} \cite{Pl}
 \cite{PS} for examples where stability holds.

The second basic tool is the following result, which is both a strengthening and an extension to quadratic modules of Schm\"udgen's
fibre theorem in \cite{S2}; also see \cite{N1}.  

\begin{thm}\label{new} Let $f \in A$, $a,b \in \Bbb{R}$.
\begin{enumerate}
\item If $a < f < b$ on $\mathcal{Y}_M$ then $b-f,f-a \in M^{\ddagger}$.
\item If $A$ has countable vector space dimension and $b-f,f-a \in \overline{M}$ then
$\overline{M} = \cap_{a\le \lambda \le b}
\overline{M_{\lambda}}$, where $M_{\lambda} := M+(f-\lambda)$.
\end{enumerate}
\end{thm}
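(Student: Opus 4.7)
My plan for part (1) is a Zorn's-lemma extension argument. For each $\epsilon > 0$, form the quadratic module $N_\epsilon := M + (\sum A^2)(f - b - \epsilon)$ generated by $M$ and $f - b - \epsilon$. The key claim is that $N_\epsilon$ is improper: if it were proper, a Zorn argument would extend it to a maximal proper quadratic module, which is classically a semiordering $Q$; but then $Q \in \mathcal{Y}_M$ would contain both $f - b - \epsilon$ and (by hypothesis) $b - f$, so their sum $-\epsilon$ lies in $Q$, forcing $-1 \in Q$ and contradicting properness. Hence $-1 \in N_\epsilon$, which rearranges to $\sigma_\epsilon(b + \epsilon - f) = 1 + m_\epsilon$ with $\sigma_\epsilon \in \sum A^2$, $m_\epsilon \in M$. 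The remaining task---extracting from this family of identities a single $q \in A$ with $(b - f) + \epsilon q \in M$ for every $\epsilon > 0$, as Corollary~\ref{1.2} demands---is the main obstacle: one must either control $\sigma_\epsilon$ uniformly in $\epsilon$, or reduce to the two-dimensional subspace $W = \mathbb{R}\cdot 1 + \mathbb{R}\cdot f$ and prove $b - f \in \overline{M \cap W}$ using the convex-cone analysis of Section~1. The argument for $f - a$ is symmetric.

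For part (2), my plan relies on duality and a spectral disintegration. The inclusion $\overline{M} \subseteq \bigcap_\lambda \overline{M_\lambda}$ is immediate from $M \subseteq M_\lambda$. For the reverse, suppose $g \notin \overline{M}$; Hahn-Banach (with $\overline{M} = M^{\vee\vee}$) yields $L \in M^\vee$ with $L(g) < 0$. Since $\overline{M}$ is a quadratic module containing $b - f$ and $f - a$, the elements $(b-f)h^2$ and $(f-a)h^2$ lie in $\overline{M}$ for every $h \in A$, producing the sandwich $a\,L(h^2) \le L(fh^2) \le b\,L(h^2)$. Thus multiplication by $f$ acts as a symmetric operator with spectrum in $[a,b]$ on the pre-Hilbert space $H$ built from the positive semidefinite form $(p,q) \mapsto L(pq)$ on $A$; the countable-dimension hypothesis on $A$ makes $H$ separable.

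I would then apply the spectral theorem to $T_f$ on $H$ together with a direct-integral decomposition $H \cong \int^{\oplus}_{[a,b]} H_\lambda \, d\mu(\lambda)$ in which $T_f$ acts as $\lambda\cdot\mathrm{id}$ on the fiber $H_\lambda$. Commutativity of $A$ ensures each $T_h$ commutes with $T_f$ and decomposes as $\int^{\oplus} T_h(\lambda) \, d\mu$; setting $L_\lambda(h) := \langle T_h(\lambda)\,\tilde{1}_\lambda, \tilde{1}_\lambda\rangle$ gives a measurable family with $L = \int L_\lambda \, d\mu$ and $L_\lambda((f - \lambda)h) = 0$ for all $h$ and $\mu$-a.e. $\lambda$. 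The inequalities $L(p^2 g_i) \ge 0$ for generators $g_i$ of $M$ make each $T_{g_i}$ positive on $H$, and positivity passes to a.e. fiber, so $L_\lambda \in M^\vee \cap M_\lambda^\vee$ a.e. Since $g \in \overline{M_\lambda} = M_\lambda^{\vee\vee}$ for every $\lambda \in [a,b]$, we obtain $L_\lambda(g) \ge 0$ a.e., whence $L(g) = \int L_\lambda(g)\, d\mu \ge 0$, contradicting $L(g) < 0$. The chief technical subtlety is organizing the direct-integral decomposition to handle all $h \in A$ simultaneously and to transfer positivity of the $T_{g_i}$ to generic fibers; both steps crucially use separability of $H$, which is where the countable-dimension hypothesis enters.
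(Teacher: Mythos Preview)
Your approach to part~(2) via the GNS construction and the spectral decomposition of the bounded self-adjoint operator $T_f$ is essentially Schm\"udgen's method, which is exactly what the paper invokes (by reference to \cite[Theorem~4.4.1]{M} and \cite[Theorem~2.6]{N4}, without reproving it) for this half. One small slip: $M$ need not be finitely generated, so replace ``generators $g_i$'' by all elements of $M$; countability of $A$ still suffices to run the a.e.\ arguments in the direct integral.

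Part~(1), however, has a real gap precisely where you flag it, and neither of your proposed fixes works. The family of identities $\sigma_\epsilon(b+\epsilon-f)=1+m_\epsilon$ carries no uniformity in $\epsilon$, so there is no route from it to a single $q$ as Corollary~\ref{1.2} requires. And the two-dimensional subspace $W=\mathbb{R}\cdot 1+\mathbb{R}\cdot f$ is too small: $M\cap W$ may reduce to $\mathbb{R}_{\ge 0}\cdot 1$ (nothing forces any $r+sf$ with $s\ne 0$ into $M$), so in general $b-f\notin\overline{M\cap W}$. The paper's argument is substantially different. After a standard reduction (translate so that $a=-b=-\ell$ and use $2\ell(\ell-f)=(\ell-f)^2+(\ell^2-f^2)$) it suffices to show $\ell^2-f^2\in M^{\ddagger}$ whenever $\ell^2-f^2>0$ on $\mathcal{Y}_M$. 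One applies the semiordering Positivstellensatz \emph{once}: $(\ell^2-f^2)p=1+q$ with $p\in\sum A^2$, $q\in M$. The crucial move is to work not in a two-dimensional space but in $V=\mathbb{R}[f]+\mathbb{R}p$, which contains all powers $f^i$ \emph{and} the multiplier $p$. Iterating the single identity yields $\ell^{2i+2}p-f^{2i}\in M$ for every $i\ge 1$, and these relations force $p+1$ to be an interior point of $M\cap V$ in $V$. Hamburger's theorem, applied to $r\mapsto L(r(f))$ for each $L\in(M\cap V)^\vee$, then shows $\ell^2-f^2\in\overline{M\cap V}$; nonempty interior upgrades this to $(M\cap V)^{\ddagger}\subseteq M^{\ddagger}$. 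The ingredients you are missing are the enlarged subspace containing $p$, the algebraic iteration that manufactures an interior point, and the appeal to Hamburger's theorem.
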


From part (1) one can immediately deduce that $b'-f, f-a'\in
\overline{M}$ where $a':= \sup\{ a\in \Bbb{R} \mid a\le f \text{
on } \mathcal{Y}_M\}$, $b':= \inf\{ b \in \Bbb{R} \mid f\le b
\text{ on } \mathcal{Y}_M\}$. In fact one even gets  $b'-f,
f-a'\in (M^{\ddagger})^{\ddagger}$, the second cone in the sequence of iterated
sequential closures of $M$.

Part (1) is useful in conjunction with part (2).
If $M$  and $A$ are finitely generated and
either $M$ is a preordering or $\dim(\frac{A}{M\cap -M}) \le 1$,
then the assumption  that $a\le f \le b$ on $\mathcal{Y}_M$ is equivalent to the assumption that
$a\le f \le b$ on $\mathcal{K}_M$; see Proposition \ref{2.4}. In particular, 
parts (1) and (2) taken together
yield Schm\"udgen's result in \cite{S2} as a special case.

Part (1) is also of independent interest. It is an improvement of the corresponding result in \cite{S2}, not only because of the extension from preorderings to quadratic modules, but also because the conclusion $b-f, f-a \in \overline{M}$ has been replaced by the stronger conclusion  $b-f, f-a \in M^{\ddagger}$.


The proof of (2) for finitely generated algebras and
finitely generated quadratic modules is given already in \cite[Theorem
4.4.1]{M}.  The general case of an algebra of countable vector
space dimension and arbitrary $M$ is almost the same, see
\cite[Theorem 2.6]{N4}.

As explained already in  \cite{M} \cite{S1} \cite{S2}, to prove (1), one is reduced to showing:

\begin{lemma}\label{2.8} Suppose $f\in A$, $\ell\in \Bbb{R}$, $\ell^2-f^2 >0$ on $\mathcal{Y}_M$. Then $\ell^2-f^2 \in M^{\ddagger}$.
\end{lemma}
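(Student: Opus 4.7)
The plan has two main stages: first, translate the hypothesis into an algebraic Positivstellensatz identity; second, use Corollary~\ref{1.2} to extract a witness for membership in $M^{\ddagger}$.

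Strict positivity of $\ell^2-f^2$ on $\mathcal{Y}_M$ means that no semiordering over $M$ can contain $-(\ell^2-f^2)$: such a $Q$ would place $\ell^2-f^2$ in $Q\cap(-Q)$, contradicting strict positivity at $Q$. Hence the quadratic module $M+\sum A^2\cdot(-(\ell^2-f^2))$ has no semiordering extension, and by a Zorn's lemma argument it must be improper: $-1$ lies in it, yielding the key identity
\[
\sigma\cdot(\ell^2-f^2)\;=\;1+m,\qquad \sigma\in \sum A^2,\ m\in M.
\]
Applying the same reasoning to each of the factors $\ell\pm f$ (which are also strictly positive on $\mathcal{Y}_M$, since $|f|<\ell$ at every $Q$) yields further identities $\sigma_\pm(\ell\pm f)=1+m_\pm$, with $\sigma_\pm\in\sum A^2$ and $m_\pm\in M$.

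By Corollary~\ref{1.2}, to prove $\ell^2-f^2\in M^{\ddagger}$ it suffices to exhibit a single $q\in A$ such that $(\ell^2-f^2)+\epsilon q\in M$ for every $\epsilon>0$. A naive use of the first identity gives only $\sigma\cdot\bigl((\ell^2-f^2)+\epsilon q\bigr)\in M$, one factor of $\sigma$ too many, because $\sigma$ is not invertible in $A$. I would remove this multiplier by combining the identities for $\ell\pm f$ with the elementary relation $2\ell(\ell\pm f)=(\ell\pm f)^2+(\ell^2-f^2)$: this lets one trade the factor $\sigma_\pm(\ell\pm f)$ for the sum of a pure-square term (automatically in $M$) and a term proportional to $\ell^2-f^2$. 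Working in a common finite-dimensional subspace of $A$ spanned by $\ell^2-f^2$ together with $\sigma,m,\sigma_\pm,m_\pm$ and their basic products, one recombines these identities to read off an explicit $q$---independent of $\epsilon$---and verify that $(\ell^2-f^2)+\epsilon q$ decomposes as a sum of elements of $M$.

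The main obstacle is exactly this elimination of the sum-of-squares multipliers. For a preordering the identity $(1+m_+)(1+m_-)=\sigma_+\sigma_-(\ell^2-f^2)$ places $\sigma_+\sigma_-(\ell^2-f^2)$ directly in $M$ (since $M$ is multiplicatively closed), reducing the task to removing the single combined multiplier $\sigma_+\sigma_-$; but for a general quadratic module the cross-term $m_+m_-$ need not lie in $M$, and one must control it via an identity such as $4m_+m_-=(m_++m_-)^2-(m_+-m_-)^2$, absorbing the residual $-(m_+-m_-)^2/4$ into the $\epsilon q$-perturbation. This absorption is the delicate step, and is precisely what limits the conclusion to $\ell^2-f^2\in M^{\ddagger}$ rather than $\ell^2-f^2\in M$ itself.
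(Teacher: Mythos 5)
Your first step is correct and matches the paper: strict positivity of $\ell^2-f^2$ on $\mathcal{Y}_M$ gives, via the abstract Positivstellensatz for semiorderings, an identity $p(\ell^2-f^2)=1+q$ with $p\in\sum A^2$, $q\in M$. But from that point on the proposal has a genuine gap at exactly the step you yourself flag as ``delicate''. The elimination of the sums-of-squares multiplier is not a technical absorption that can be done by recombining finitely many identities such as $2\ell(\ell\pm f)=(\ell\pm f)^2+(\ell^2-f^2)$ or $4m_+m_-=(m_++m_-)^2-(m_+-m_-)^2$; no candidate $q$ is produced and no verification that $(\ell^2-f^2)+\epsilon q\in M$ is even sketched. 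Moreover, a finite collection of algebraic identities cannot suffice in principle: a linear functional $L\in M^{\vee}$ constrained by finitely many such relations need not satisfy $L(\ell^2-f^2)\ge 0$, which is what $\ell^2-f^2\in M^{\ddagger}\subseteq\overline{M}=M^{\vee\vee}$ ultimately requires.

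The paper's proof gets around this with an analytic argument that uses the single identity $p(\ell^2-f^2)=1+q$ to generate an \emph{infinite} family of memberships $\ell^{2i+2}p-f^{2i}\in M$ for all $i\ge 1$ (Claims 1 and 2). It then restricts to the subspace $V=\Bbb{R}[f]+\Bbb{R}p$, shows that $p+1$ is an interior point of $M\cap V$ in $V$ (so that $\overline{M\cap V}=(M\cap V)^{\ddagger}$ by Proposition \ref{x}), and for each $L\in (M\cap V)^{\vee}$ applies Hamburger's theorem to $r(Y)\mapsto L(r(f))$ to obtain a representing measure $\nu$ on $\Bbb{R}$. The infinitely many moment bounds $L(f^{2i})\le \ell^{2i+2}L(p)$ force $\nu$ to be supported on $[-\ell,\ell]$, whence $L(\ell^2-f^2)\ge 0$ and $\ell^2-f^2\in(M\cap V)^{\vee\vee}=(M\cap V)^{\ddagger}\subseteq M^{\ddagger}$. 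This use of Hamburger's theorem (together with the interior-point trick that converts closure into sequential closure) is the essential content of the lemma and is entirely absent from your argument. As a secondary point, your claim that $\ell\pm f>0$ on $\mathcal{Y}_M$ follows ``since $|f|<\ell$ at every $Q$'' is not immediate either: semiorderings are not closed under multiplication, so positivity of the product $(\ell-f)(\ell+f)$ does not directly yield the signs of the factors; one would need the valuation-theoretic analysis of semiorderings to justify it, and in any case these extra identities are not needed in the paper's proof.
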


\begin{proof}
By the abstract Positivstellensatz for semiorderings, see
\cite[Theorem 5.3.2]{M}, the hypothesis implies
$(\ell^2-f^2)p=1+q$ for some $p\in \sum A^2$, $q\in M$.  Now one
starts with Schm\"udgen's argument  involving Hamburger's Theorem
(also see the proof of \cite[Theorem 3.5.1]{M}), i.e. one proceeds
as follows:

\smallskip
Claim 1: $\ell^{2i}p-f^{2i}p \in M$ for all $i\ge 1$. Since
$\ell^2p-f^2p = (\ell^2-f^2)p= 1+q$, this is clear when $i=1$.
Since $$\ell^{2i+2}p-f^{2i+2}p =
\ell^2(\ell^{2i}p-f^{2i}p)+f^{2i}(\ell^2p-f^2p),$$ the result
follows, by induction on $i$.

\smallskip
Claim 2: $\ell^{2i+2}p-f^{2i} \in M$ for all $i\ge 1$. Since
$$\ell^{2i+2}p-f^{2i} =
\ell^2(\ell^{2i}p-f^{2i}p)+f^{2i}(\ell^2p-1),$$  and $\ell^2p-1 =
q+f^2p \in M$, this follows from Claim 1.

\smallskip Now we use a little technical trick. Define $V:=\Bbb{R}[f] +\Bbb{R}p$, a vector subspace of $A$.
Write $M_V:=M\cap V$, so $M_V$ is a cone in $V$. We claim
that $p+1$ is an interior point of $M_V$ in $V$. Indeed,
$$\ell^{2i+2}p \pm 2f^i+1=\ell^{2i+2}p-f^{2i} + (f^i\pm1)^2\in
M_V$$for all $i\geq 1$, using Claim 2. So with $N:=\max\{1,\ell\}$
we have for every $i\geq 1$ $$(p+1) \pm \frac{2}{N^{2i+2}}\cdot
f^i\in M_V.$$ Clearly also
$$(p+1)\pm 1 \in M_V \mbox{ and } (p+1)\pm p\in M_V$$ holds, which
proves the claim, using Proposition \ref{x}.

We now claim that $\ell^2-f^2$ belongs to
$\overline{M_V}=(M_V)^{\vee\vee}$ in $V$. Therefore fix $L \in (M_V)^{\vee}$ and consider the linear map $L_1 : \Bbb{R}[Y] \rightarrow \Bbb{R}$
defined by $L_1(r(Y)) = L(r(f))$. Here, $r(f)$ denotes the image of $r(Y)$ under the algebra homomorphism from $\Bbb{R}[Y]$ to $V$
 defined by $Y \mapsto f$. Since $r(f)^2$ is a square in $A$, and $M$ contains all squares, and $L$ is $\ge 0$ on $M_V$,
 we see that $L_1(r^2) = L(r(f)^2)\ge 0$ for all $r \in \Bbb{R}[Y]$. By Hamburger's Theorem \cite[Corollary 3.1.4]{M}, there
 exists a Borel measure $\nu$ on $\Bbb{R}$ such that $$L(r(f))= L_1(r) = \int r \, d\nu,$$ for each $r\in \Bbb{R}[Y]$. Let $\lambda >0$
 and let $\mathcal{X}_{\lambda}$ denote the characteristic function of the set $(-\infty, -\lambda)\cup (\lambda, \infty)$.
 Then $$\lambda^{2i}\int \mathcal{X}_{\lambda}\, d\nu \le \int Y^{2i}\, d\nu = L_1(Y^{2i}) = L(f^{2i}) \le \ell^{2i+2}L(p).$$
 The first inequality follows from the fact that $\lambda^{2i}\mathcal{X}_{\lambda}\le Y^{2i}$ on $\Bbb{R}$. The last inequality
 follows from Claim 2. Since this holds for any $i\ge 1$, it clearly implies that $\int \mathcal{X}_{\lambda}\, d\nu = 0$, for any $\lambda >\ell$.
 This implies, in turn, that $\int \mathcal{X}_{\ell}\, d\nu = 0$ i.e., the set $(-\infty,-\ell)\cup (\ell, \infty)$ has $\nu$ measure zero.
 Since $Y^2\le \ell^2$ holds on the interval $[-\ell,\ell]$, this yields $$L(f^2)= \int Y^2\, d\nu \le \int \ell^2 \, d\nu = L(\ell^2).$$
 This proves $L(\ell^2-f^2)\ge 0$. Since this is true for any $L \in (M_V)^{\vee}$, this proves $\ell^2-f^2\in (M_V)^{\vee\vee}
 =\overline{M_V}$.

 Now finally, since $M_V$ has an interior point in $V$, $\overline{M_V}=(M_V)^{\ddagger}$ by Proposition \ref{x}.
 Therefore, $\ell^2-f^2\in  M_V^{\ddagger} \subseteq M^{\ddagger}.$
\end{proof}

Theorem \ref{new} can be used to produce examples where (SMP)
holds, see \cite{M} \cite{N1} \cite{S1} \cite{S2}. Assuming the
hypothesis of Theorem \ref{new} (2), $M$ satisfies (SMP) iff each
$M_{\lambda}$ satisfies (SMP). The implication ($\Leftarrow$) is
immediate from Theorem \ref{new} (2). The implication ($\Rightarrow$)
is a consequence of the following:

\begin{lemma}\label{2.9} If $A$ is finitely generated and $M$ satisfies (SMP) then so does $M+I$, for each ideal $I$ of $A$.
\end{lemma}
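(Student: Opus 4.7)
The plan is to translate (SMP) for $M+I$ into a statement about functionals using Proposition \ref{2.5}/Haviland's theorem, then push measures from $\mathcal{K}_M$ down onto $\mathcal{K}_{M+I}$.

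The inclusion $\overline{M+I} \subseteq \operatorname{Pos}(\mathcal{K}_{M+I})$ is automatic, because $\operatorname{Pos}(\mathcal{K}_{M+I})$ is closed (each evaluation $\alpha \in \mathcal{K}_{M+I}$ is linear, hence continuous) and contains $M+I$. So the content is the reverse inclusion. Using $\overline{M+I}=(M+I)^{\vee\vee}$, it suffices to fix $f \in \operatorname{Pos}(\mathcal{K}_{M+I})$ and $L \in (M+I)^{\vee}$ and show $L(f) \ge 0$.

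First I would observe two simple facts about such an $L$. Since $0 \in M$ and $I$ is an ideal, both $I$ and $-I=I$ sit inside $M+I$, so $L$ is $\ge 0$ on $\pm I$, i.e.\ $L$ vanishes on $I$. And clearly $L \in M^{\vee}$. Now invoke the hypothesis: $M$ satisfies (SMP), so by Proposition \ref{2.5} there is a positive Borel measure $\mu$ on $\mathcal{K}_M$ with $L(h)=\int h\, d\mu$ for all $h\in A$. The key step is then to show $\mu$ is supported on $\mathcal{K}_{M+I}$. Since $A$ is finitely generated it is Noetherian, so $I$ is finitely generated, say $I=(g_1,\dots,g_r)$. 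For each $g_i$ we have
\[
0 = L(g_i^2) = \int g_i^2 \, d\mu,
\]
so $g_i=0$ $\mu$-a.e.\ on $\mathcal{K}_M$. Because $\alpha \in \mathcal{K}_{M+I}$ iff $\alpha\in\mathcal{K}_M$ and $\alpha(g_1)=\dots=\alpha(g_r)=0$, the measure $\mu$ is indeed concentrated on $\mathcal{K}_{M+I}$.

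Finally, since $f \ge 0$ on $\mathcal{K}_{M+I}$ and $\mu$ lives on this set, $L(f)=\int f\, d\mu \ge 0$, which gives $f\in(M+I)^{\vee\vee}=\overline{M+I}$, completing the proof. The only non-routine ingredient is the measure-support argument, which is clean once Noetherianness of $A$ is used to reduce $I$ to finitely many generators; I do not foresee a genuine obstacle beyond being careful that $(M+I)^{\vee}$ consists exactly of functionals in $M^{\vee}$ that annihilate $I$.
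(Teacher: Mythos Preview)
Your argument is correct. The paper does not actually supply its own proof of this lemma; it simply refers the reader to \cite[Proposition 4.8]{Sc}. Your measure-support argument---observing that $L\in (M+I)^{\vee}$ forces $L|_I=0$, representing $L$ by a measure $\mu$ on $\mathcal{K}_M$ via (SMP) for $M$, and then using $\int g_i^2\,d\mu=L(g_i^2)=0$ for generators $g_i$ of $I$ to conclude that $\mu$ is concentrated on $\mathcal{K}_{M+I}$---is precisely the standard proof, and is in fact the argument Scheiderer gives in the cited reference. The one small remark worth making explicit is that $g_i^2\in I$ (because $I$ is an ideal), which is what lets you write $L(g_i^2)=0$; you use this implicitly but it is the only place where the ideal structure of $I$, rather than merely the subspace structure, enters.
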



See \cite[Proposition 4.8]{Sc} for the proof of Lemma \ref{2.9}.
Theorem \ref{new} has also been used to construct an example where
$M^{\ddagger} \ne \overline{M}$; see \cite{N3}.   The reader will
encounter additional applications of Theorem \ref{new} in Sections 3 and 4.

\section{The Compact Case}

We recall basic facts concerning archimedean quadratic modules. We characterize archimedean quadratic modules in various ways.

\begin{thm}\label{3.1} Suppose $M$ is archimedean. Then $f\ge 0$ on $\mathcal{K}_M$ $\Rightarrow$ $f+\epsilon \in M$ for all
real $\epsilon >0$. In particular, $M^{\ddagger}=\overline{M}=
\operatorname{Pos}(\mathcal{K}_M)$.
\end{thm}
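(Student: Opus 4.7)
The strategy is to reduce the statement to showing $f+\epsilon/2 \in \overline{M}$ and then ``shift by~$1$'' into the interior of $M$. Two consequences of $M$ being archimedean drive the argument. By Proposition~\ref{2.0}, $1$ is an interior point of $M$; by Proposition~\ref{x}(3) this yields $M^{\ddagger}=\overline{M}$, and by Proposition~\ref{x}(2) applied with $q=1$, any $v \in \overline{M}$ satisfies $v+\delta \in \operatorname{int}(M) \subseteq M$ for every real $\delta > 0$. Once $f+\epsilon/2 \in \overline{M}$ is established, taking $v=f+\epsilon/2$ and $\delta=\epsilon/2$ immediately gives $f+\epsilon \in M$.

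To place $f+\epsilon/2$ into $\overline{M}=M^{\ddagger}$ I would invoke Theorem~\ref{new}(1), which requires verifying $0 < f+\epsilon/2 < b$ on $\mathcal{Y}_M$ for some real $b$. The critical translation from $\mathcal{K}_M$ to $\mathcal{Y}_M$ uses that every $Q \in \mathcal{Y}_M$ is itself archimedean (since $M \subseteq Q$), so by Proposition~\ref{2.1} there is a unique ring homomorphism $\alpha : A \to \Bbb{R}$ with $\overline{Q} = \alpha^{-1}(\Bbb{R}^+)$, and automatically $\alpha \in \mathcal{K}_M$. Hence $\alpha(f) \geq 0$, and from an archimedean bound $N - f \in M$ we also get $\alpha(f) \leq N$. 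If $-(f+\epsilon/2) \in Q$, then $\alpha(-(f+\epsilon/2)) \geq 0$, contradicting $\alpha(f+\epsilon/2) \geq \epsilon/2 > 0$; an analogous argument rules out $(f+\epsilon/2)-(N+1) \in Q$. Combined with $Q \cup -Q = A$, this gives the required strict bilateral bounds on $\mathcal{Y}_M$, so Theorem~\ref{new}(1) delivers $f+\epsilon/2 \in M^{\ddagger}=\overline{M}$.

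The ``in particular'' clause then follows easily: $M^{\ddagger} \subseteq \overline{M} \subseteq \operatorname{Pos}(\mathcal{K}_M)$ is the standard inclusion (each $\alpha \in \mathcal{K}_M$ is a continuous linear functional that is nonnegative on $M$, hence on $\overline{M}$), and the first assertion gives $\operatorname{Pos}(\mathcal{K}_M) \subseteq M^{\ddagger}$, since $f + 1/n \in M$ for every $n \ge 1$ and $f+1/n \to f$ in the finite-dimensional subspace $\Bbb{R} f + \Bbb{R}$. The main technical hurdle I anticipate is precisely the second paragraph: converting pointwise nonnegativity on the small set $\mathcal{K}_M$ into strict two-sided bounds on the potentially much larger space $\mathcal{Y}_M$. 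The whole proof hinges on the fact that the archimedean property of $M$ is inherited by every $Q \in \mathcal{Y}_M$, which is what makes Proposition~\ref{2.1} applicable and effectively collapses $\mathcal{Y}_M$ onto $\mathcal{K}_M$ through the associated ring homomorphisms.
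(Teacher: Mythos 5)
Your proposal is correct and follows essentially the same route as the paper's proof: both transfer $f\ge 0$ on $\mathcal{K}_M$ to a (strict) lower bound on $\mathcal{Y}_M$ by using that every $Q\in\mathcal{Y}_M$ is archimedean and hence dominated by a ring homomorphism $\alpha\in\mathcal{K}_M$, then apply Theorem~\ref{new}(1), and finish with the interior-point argument via Propositions~\ref{2.0} and~\ref{x}. The only nit is that your chosen upper bound $N+1$ produces a contradiction only when $\epsilon<2$; replacing it by $N+1+\epsilon$ (or noting that the case of small $\epsilon$ suffices, since $M$ contains the nonnegative reals) fixes this cosmetically.
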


Theorem \ref{3.1} is Jacobi's Representation Theorem \cite{J}. See
\cite[Theorem 5.4.4]{M} for an elementary proof. There is no
requirement that $A$ or $M$ be finitely generated. We give
another proof of Theorem \ref{3.1}, based on Theorem \ref{new} (1). 

\begin{proof} Suppose $f \in A$, $f\ge 0$ on $\mathcal{K}_M$, $\epsilon \in \Bbb{R}$, $\epsilon >0$. For each $Q \in \mathcal{Y}_M$, $Q$ is archimedean (because $M$ is) so, arguing as in the proof of Proposition \ref{2.1}, $\exists$ a ring homomorphism $\alpha : A \rightarrow \Bbb{R}$ such that $\alpha^{-1}(\Bbb{R}^+) \supseteq Q$ and $f+\epsilon \in Q$. This proves $f\ge -\epsilon$ on $\mathcal{Y}_M$.
Since $M$ is archimedean, $\exists$ $b\in \Bbb{R}$, $b-f\in M$, so
$b\ge f \ge -\epsilon$ on $\mathcal{Y}_M$. According to Theorem
\ref{new} (1) this implies $f+\epsilon \in \overline{M}$ for each real
$\epsilon >0$, so $f\in \overline{M}$. Since $M$ is archimedean,
$1$ is an algebraic
 interior point of $M$. By Proposition \ref{x}, $f+\epsilon \in M$ for all real $\epsilon>0$.
\end{proof}

The following result is proved in \cite[Theorem 5.1.18]{Pr}:

\begin{thm}\label{b} If $M$ is archimedean then every maximal semiordering $Q$ of $A$ lying over $M$ is archimedean.
If $A$ is a finitely generated $\Bbb{R}$-algebra the converse is also true.
\end{thm}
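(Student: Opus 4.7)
My plan: The forward direction is immediate and does not require the hypothesis of maximality. If $M$ is archimedean and $Q\supseteq M$ is any semiordering of $A$, then for each $f\in A$ there is an integer $k\geq 0$ with $k+f\in M\subseteq Q$, so $Q$ is archimedean. I would dispatch this in a single sentence.

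For the converse I would argue the contrapositive: assuming $A$ is finitely generated with generators $x_1,\ldots,x_n$ and that $M$ is not archimedean, I will exhibit a non-archimedean maximal semiordering of $A$ lying over $M$. Arguing as in the proof of Proposition \ref{2.0}, $M$ is archimedean iff $N-\sum x_i^2\in M$ for some integer $N\geq 0$; consequently the element $f:=\sum x_i^2$ is unbounded above modulo $M$, meaning $n-f\notin M$ for every integer $n\geq 0$. The plan is to find a maximal semiordering $Q^*\supseteq M$ which inherits this $M$-unboundedness of $f$.

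I would apply Zorn's Lemma to the family $\mathcal{F}$ of proper quadratic modules $Q$ satisfying $M\subseteq Q$ and $n-f\notin Q$ for every $n\geq 0$. This family is nonempty (it contains $M$) and closed under unions of chains, hence has a maximal element $Q_0$. The technical heart of the proof is to show that $Q_0$ is actually a semiordering, which I would do by adapting the standard argument that every maximal proper quadratic module is a semiordering. Suppose for contradiction that $a\in A$ satisfies $a\notin Q_0$ and $-a\notin Q_0$; then the proper quadratic modules $Q_0+a\sum A^2$ and $Q_0-a\sum A^2$ strictly contain $Q_0$, so by maximality each must leave $\mathcal{F}$, either by containing $-1$ or by containing some $n_i-f$. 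Writing the resulting identities as $n_i-f=q_i\pm a\sigma_i$ with $q_i\in Q_0$ and $\sigma_i\in\sum A^2$ and taking the linear combination $\sigma_1(n_2-f)+\sigma_2(n_1-f)=\sigma_1 q_2+\sigma_2 q_1\in Q_0$ to eliminate $a$, one arrives, after further rearrangement exploiting the $\sum A^2$-module property of $Q_0$, at an honest integer bound $N-f\in Q_0$, contradicting $Q_0\in\mathcal{F}$. Hence $Q_0\cup -Q_0=A$, and the usual argument (or an appeal to \cite[Lemma 5.1.4]{M}) also gives primality of $Q_0\cap -Q_0$, so $Q_0$ is a semiordering. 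A Zorn extension of $Q_0$ to a maximal semiordering $Q^*$ then finishes the argument: because $Q_0$ is already a semiordering, the condition $n-f\notin Q_0$ forces $f-n\in Q_0\subseteq Q^*$ for every $n\geq 0$, so $Q^*$ is not archimedean.

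The main obstacle is the elimination step itself: turning the combined $\sum A^2$-weighted relation $\sigma_1(n_2-f)+\sigma_2(n_1-f)\in Q_0$ into an honest integer bound $N-f\in Q_0$. Since the multipliers $\sigma_i$ are not scalars one cannot simply divide them away, and the argument must exploit the proper-QM property of $Q_0$ together with the module structure, in the spirit of the proof given in \cite[Theorem 5.1.18]{Pr}.
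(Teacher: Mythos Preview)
The paper does not give its own proof of this theorem; it simply cites \cite[Theorem 5.1.18]{Pr}, and the remark following Theorem~\ref{c} indicates that the proof there proceeds via compactness of the spectral space $\operatorname{Semi-Sper}(A)$, not by a Zorn-type elimination argument. Your forward direction is correct and needs no further comment.

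For the converse, your set-up is reasonable and your reduction to showing that the Zorn-maximal $Q_0\in\mathcal F$ is a semiordering is the right target. But the obstacle you flag is a genuine gap, not a routine step. From the two representations you only obtain $(\sigma_1+\sigma_2)(N-f)\in Q_0$ with $N=\max(n_1,n_2)$, and there is no evident mechanism in a quadratic module to strip off the weight $\sigma_1+\sigma_2\in\sum A^2$ and produce an honest bound $N'-f\in Q_0$; the $\sum A^2$-module structure simply does not let you divide. Your appeal to ``the spirit of \cite[Theorem~5.1.18]{Pr}'' is misplaced, because Prestel--Delzell do \emph{not} argue by this kind of elimination at all. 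A cleaner route that avoids your elimination entirely (and is closer to what the paper itself does in the proof of Theorem~\ref{c}) is to set $f=\sum x_i^2$ and form the increasing chain $M_\ell:=M+\sum A^2(f-\ell)$: if every $M_\ell$ is proper, then any maximal proper quadratic module containing $\bigcup_\ell M_\ell$ is automatically a maximal semiordering containing $f-\ell$ for every $\ell$, hence non-archimedean, and you are done. The substantive work --- and the place where the finite-generation hypothesis and the specific techniques of \cite{Pr} actually enter --- is the remaining implication that $-1\in M_\ell$, i.e.\ an identity $\sigma(\ell-f)=1+m$ with $\sigma\in\sum A^2$ and $m\in M$, forces $M$ itself to be archimedean. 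That step is not a ``further rearrangement exploiting the $\sum A^2$-module property''; it requires a separate argument.
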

There is no requirement here that $M$ be finitely generated. Note:
Maximal  semiorderings and maximal proper quadratic modules are
the same thing, e.g., see \cite{J} or \cite[Sect. 5.3]{M}. By
\cite[Theorem 5.2.5]{M}, every maximal semiordering $Q$ which is
archimedean has the form $Q= \alpha^{-1}(\Bbb{R}^+)$ for some
(unique) ring homomorphism $\alpha : A \rightarrow \Bbb{R}$.


\begin{cor}\label{3.2} Suppose $x_1,\dots,x_n$ generate $A$ as an $\Bbb{R}$-algebra. The following are equivalent:
\begin{enumerate}
\item $M$ is archimedean.
\item $\sum_{i=1}^n x_i^2$ is bounded on $\mathcal{Y}_M$.
\end{enumerate}
\end{cor}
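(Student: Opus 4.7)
For $(1) \Rightarrow (2)$ the argument is immediate: if $M$ is archimedean then there is an integer $N$ with $N - \sum_{i=1}^n x_i^2 \in M$, and this element then lies in every $Q \in \mathcal{Y}_M$ since $M \subseteq Q$.

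For the converse $(2) \Rightarrow (1)$, my plan is to argue by contradiction, using Theorem \ref{b} to transfer the boundedness hypothesis from all of $\mathcal{Y}_M$ to a single semiordering. So suppose $N - \sum_{i=1}^n x_i^2 \in Q$ for every $Q \in \mathcal{Y}_M$ and, toward a contradiction, that $M$ is not archimedean. Since $A$ is finitely generated, the converse part of Theorem \ref{b} supplies a maximal semiordering $Q$ over $M$ that is itself not archimedean. Because $Q \in \mathcal{Y}_M$ we have $N - \sum_{i=1}^n x_i^2 \in Q$, and adding $\sum_{j \neq i} x_j^2 \in Q$ (a sum of squares, hence in any quadratic module) gives $N - x_i^2 \in Q$ for each $i$.

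The remaining step is to show that $N - x_i^2 \in Q$ for each generator already forces $Q$ to be archimedean, contradicting the choice of $Q$. This will use the standard ``bootstrap from generators'' for quadratic modules: the set $H := \{ f \in A \mid k - f^2 \in Q \text{ for some } k \in \mathbb{N}\}$ is closed under addition and multiplication, via the identities $2(k_1 - f^2) + 2(k_2 - g^2) + (f - g)^2 = 2k_1 + 2k_2 - (f+g)^2$ and $g^2(k_1 - f^2) + k_1(k_2 - g^2) = k_1 k_2 - (fg)^2$, whose left sides visibly lie in $Q$ (which absorbs multiplication by squares and by positive reals). Since $\mathbb{R} \subseteq H$ and each $x_i \in H$, we get $H = A$, so every $f \in A$ satisfies $k - f^2 \in Q$; combined with $(1 \pm f)^2 \in Q$ this yields $k + 1 \pm 2f \in Q$, and rescaling by $\tfrac12$ gives $Q$ archimedean.

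The conceptual heart of the proof is the appeal to Theorem \ref{b}, which converts the diffuse hypothesis ``$\sum x_i^2$ bounded on all of $\mathcal{Y}_M$'' into the concrete statement ``bounded in a particular non-archimedean maximal semiordering over $M$''; the subring verification is routine but must be spelled out, and is the only place where calculation is needed.
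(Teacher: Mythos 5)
Your proof is correct and follows essentially the same route as the paper: the paper notes that $k-\sum x_i^2\in Q$ forces each maximal semiordering $Q$ over $M$ to be archimedean (citing [M, Corollary 5.2.4], i.e.\ W\"ormann's trick, which you instead spell out via the subring $H$) and then applies the converse direction of Theorem \ref{b}. Your contrapositive phrasing and explicit bootstrap computation are just a rearrangement of that same argument.
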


If $M$ is a finitely generated preordering then, by Proposition \ref{2.4}, $\sum
x_i^2$ is bounded  on $\mathcal{Y}_M$ $\Leftrightarrow$ $\sum
x_i^2$ is bounded on $\mathcal{K}_M$ $\Leftrightarrow$
$\mathcal{K}_M$ is compact. In this case, Corollary \ref{3.2} is
just ``W\"ormann's Trick''; see \cite{M} \cite{W}. 

\begin{proof} (1) $\Rightarrow$ (2) is clear. (2) $\Rightarrow$ (1). Fix a positive constant $k$ such that $k-\sum x_i^2 > 0$ on
$\mathcal{Y}_M$. By \cite[Corollary 5.2.4]{M}, each maximal semiordering $Q$ of $A$ lying over $M$ is archimedean.
Now apply Theorem \ref{b}.
\end{proof}

The second assertion of Theorem \ref{b} is not true for general $A$. In \cite{M0} an example is given of a countably infinite dimensional $\Bbb{R}$-algebra $A$ such that every maximal proper quadratic module $Q$ of $A$ is archimedean (so has the form $\alpha^{-1}(\Bbb{R}^+)$ for some ring homomorphism $\alpha : A \rightarrow \Bbb{R}$), but $\sum A^2$ itself is not archimedean. In fact, in this example, the only elements $h\in A$ satisfying $\ell\pm h \in \sum A^2$ for some integer $\ell\ge 1$ are the elements of $\Bbb{R}$.
But there is a certain weak version of the second assertion of Theorem \ref{b} which does hold for general $A$:

\begin{thm}\label{c} If every maximal semiordering of $A$ lying over $M$ is archimedean, then $\mathcal{K}_M$ is compact
and $(M^{\ddagger})^{\ddagger} = \overline{M} = \operatorname{Pos}(\mathcal{K}_M)$. In particular,
$(M^{\ddagger})^{\ddagger}$ is archimedean.
\end{thm}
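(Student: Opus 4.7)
The plan is to reduce everything to the following boundedness statement: for each $f \in A$ there exist $a, b \in \Bbb{R}$ with $f - a, b - f \in Q$ for every $Q \in \mathcal{Y}_M$. Once this is established, compactness of $\mathcal{K}_M$ is immediate ($\mathcal{K}_M$ is closed in $\Bbb{R}^A$ with the product topology and is contained in a product of compact intervals, so Tychonoff finishes), and the closure equalities follow from Theorem \ref{new}(1) applied in a routine way.

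For the boundedness, suppose $f$ is not bounded above. Then for each $N \in \Bbb{N}$ some $Q_N \in \mathcal{Y}_M$ satisfies $f - N \in Q_N$ (using $Q_N \cup -Q_N = A$). I would form the quadratic module $M' := M + \Bbb{R}^+(f - 1) + \Bbb{R}^+(f - 2) + \cdots$ and verify it is proper: any relation $-1 = m + \sum_i \sigma_i(f - n_i)$ with $m \in M$ and $\sigma_i \in \sum A^2$ would lie in $Q_N$ for any $N > \max n_i$, since $f - n_i = (f - N) + (N - n_i) \in Q_N$ and $N - n_i > 0$ forces the real constant into $Q_N$. Extending $M'$ by Zorn to a maximal semiordering $Q^* \in \mathcal{Y}_M$, the hypothesis forces $Q^*$ to be archimedean, so Proposition \ref{2.1} produces a homomorphism $\alpha : A \to \Bbb{R}$ with $Q^* = \alpha^{-1}(\Bbb{R}^+)$ and $\alpha(f) \geq n$ for every $n$, which is absurd. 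A symmetric argument bounds $f$ below.

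Next, for $f \in \operatorname{Pos}(\mathcal{K}_M)$ and $\epsilon \in (0,1)$ I would show that $0 < f + \epsilon < b + 1$ holds strictly on $\mathcal{Y}_M$ (with $b$ as in the boundedness step). Both strict inequalities are obtained by the same extend-to-maximal trick: if some $Q \in \mathcal{Y}_M$ contained $-(f+\epsilon)$, extending to a maximal $Q^* = \alpha^{-1}(\Bbb{R}^+)$ would force $\alpha(f) \leq -\epsilon$, contradicting $\alpha \in \mathcal{K}_M$; and if $Q$ contained $(f+\epsilon) - (b+1)$, we would get $\alpha(f) \geq b + 1 - \epsilon > b$ while $b - f \in Q^*$ gives $\alpha(f) \leq b$. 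Theorem \ref{new}(1) then yields $f + \epsilon \in M^{\ddagger}$ for each $\epsilon \in (0,1)$, and Corollary \ref{1.2} (taking $q = 1$) gives $f \in (M^{\ddagger})^{\ddagger}$. Combined with the trivial chain $(M^{\ddagger})^{\ddagger} \subseteq \overline{M} \subseteq \operatorname{Pos}(\mathcal{K}_M)$ — valid because each evaluation $\alpha \in \mathcal{K}_M$ is a continuous linear functional, so $\operatorname{Pos}(\mathcal{K}_M)$ is closed — we obtain the three-way equality. Archimedeanity of $(M^{\ddagger})^{\ddagger}$ is then a direct corollary: for every $f \in A$, the boundedness step provides $b \in \Bbb{R}$ with $b - f \in \operatorname{Pos}(\mathcal{Y}_M) \subseteq \operatorname{Pos}(\mathcal{K}_M) = (M^{\ddagger})^{\ddagger}$, whence $k - f \in (M^{\ddagger})^{\ddagger}$ for any integer $k \geq b$.

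The main obstacle is the uniform boundedness step. The hypothesis only directly controls \emph{maximal} semiorderings, while Theorem \ref{new}(1) demands a bound valid at every element of $\mathcal{Y}_M$. The Zorn-extension trick applied to $M + \{f - n : n \in \Bbb{N}\}$ is what bridges this gap, manufacturing from a hypothetically unbounded $f$ a maximal semiordering that the hypothesis is forced to make archimedean, producing the needed contradiction.
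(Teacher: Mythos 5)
Your proposal is correct and takes essentially the same route as the paper: both proofs use Zorn's lemma plus the hypothesis that maximal semiorderings over $M$ are archimedean (hence of the form $\alpha^{-1}(\Bbb{R}^+)$) to show every $f\in A$ is bounded, deduce compactness of $\mathcal{K}_M$ via Tychonoff, transfer ``$f\ge 0$ on $\mathcal{K}_M$'' to strict bounds on $\mathcal{Y}_M$ by the same extend-to-maximal trick, and finish with Theorem \ref{new}(1). The only (harmless) difference is the auxiliary object in the Zorn step: the paper uses the increasing chain $M_{\ell}=M-\sum A^2(\ell^2-f^2)$ and extracts a certificate $-1=s-p(\ell^2-f^2)$, whereas you adjoin the elements $f-n$ to $M$ directly, which bounds $f$ on $\mathcal{Y}_M$ without passing through the Positivstellensatz.
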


There is no requirement here that $A$ or $M$ be finitely generated.

\begin{proof} The result follows from Theorem \ref{new} (1) once we prove that $\mathcal{K}_M$
  is compact (using the fact that $f \ge 0$ on $\mathcal{K}_M$ $\Rightarrow$ $f > -\epsilon$ on $\mathcal{Y}_M$,
  for all real $\epsilon > 0$). Fix $f\in A$ and let $M_{\ell} = M-\sum A^2(\ell^2-f^2)$. Then $M_{\ell} \subseteq M_{\ell+1}$.
  If $-1 \notin \cup_{\ell\ge 1} M_{\ell}$ then we would have a maximal semiordering $Q$ containing $\cup_{\ell\ge 1} M_{\ell}$.
  Then $-(\ell^2-f^2)\in Q$ for all $\ell \ge 1$, so $(\ell-1)^2-f^2 \notin Q$ for all $\ell \ge 1$. This is a contradiction.
  Thus $-1 = s-p(\ell^2-f^2)$ for some $s\in M$, $p\in \sum A^2$ and some integer $\ell \ge 1$. This implies $-\ell < \alpha(f)< \ell$
  for all $\alpha \in \mathcal{K}_M$, for some integer $\ell \ge 1$ (depending on $f$), say $\ell = \ell_f$. Then $\mathcal{K}_M$ is identified
  with a closed subspace of the compact space $\prod_{f\in A}[-\ell_f,\ell_f]$.
\end{proof}

Note: Instead of arguing with the quadratic modules $M_{\ell}$, one could exploit the compactness of the spectral space $\operatorname{Semi-Sper}(A)$, as was done in the proof of \cite[Theorem 5.1.18]{Pr}. This shows that if $\mathcal{Y}$ is any set of archimedean semiorderings in $\operatorname{Semi-Sper}(A)$ which is closed in the constructible topology then $\cap_{Q \in \mathcal{Y}} Q$ is archimedean.

\begin{cor}\label{A} The following are equivalent:
\begin{enumerate}
\item $\overline{M}$ is archimedean.
\item Every maximal semiordering of $A$ lying over $\overline{M}$ is archimedean.
\item $\mathcal{K}_M$ is compact and $\overline{M} = \operatorname{Pos}(\mathcal{K}_M)$.
\end{enumerate}
\end{cor}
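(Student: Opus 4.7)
The plan is to prove the three equivalences cyclically as $(1)\Rightarrow(2)\Rightarrow(3)\Rightarrow(1)$, applying the already-established Theorems \ref{b} and \ref{c} with $M$ replaced by its closure $\overline{M}$. A preliminary observation to use throughout is that $\mathcal{K}_M = \mathcal{K}_{\overline{M}}$: for any ring homomorphism $\alpha : A \to \RR$, the preimage $\alpha^{-1}(\RR^+)$ is closed (since linear functionals are continuous in the finest locally convex topology), hence $M \subseteq \alpha^{-1}(\RR^+)$ iff $\overline{M} \subseteq \alpha^{-1}(\RR^+)$.

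For $(1)\Rightarrow(2)$, I would just quote the first half of Theorem \ref{b} applied to $\overline{M}$: if $\overline{M}$ is archimedean, every maximal semiordering lying over $\overline{M}$ is archimedean. For $(2)\Rightarrow(3)$, I would apply Theorem \ref{c} to $\overline{M}$ in place of $M$. Since $\overline{M}$ is already closed, in particular $(\overline{M})^{\ddagger} = \overline{M}$, so the conclusion of Theorem \ref{c} simplifies to: $\mathcal{K}_{\overline{M}}$ is compact and $\overline{M} = \operatorname{Pos}(\mathcal{K}_{\overline{M}})$. By the preliminary observation, $\mathcal{K}_{\overline{M}} = \mathcal{K}_M$, which gives exactly~(3).

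For $(3)\Rightarrow(1)$, I would argue as follows. Identify $\mathcal{K}_M$ with a subspace of $\RR^A$ via $\alpha \mapsto (\alpha(f))_{f \in A}$; with the product topology, each evaluation map $\alpha \mapsto \alpha(f)$ is continuous. Assuming $\mathcal{K}_M$ is compact, every $f \in A$ is bounded on $\mathcal{K}_M$, so there exists an integer $k \ge 1$ with $k \pm f \ge 0$ on $\mathcal{K}_M$. The assumption $\overline{M} = \operatorname{Pos}(\mathcal{K}_M)$ then yields $k \pm f \in \overline{M}$; in particular $k + f \in \overline{M}$, so $\overline{M}$ is archimedean.

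I expect no serious obstacle in any step: the content lies entirely in Theorems \ref{b} and \ref{c}, and the minor point requiring care is simply verifying that $\mathcal{K}_M$ is unchanged upon passing from $M$ to $\overline{M}$ and that the relevant topology on $\mathcal{K}_M$ makes the evaluation functionals continuous. These are essentially bookkeeping checks rather than substantial arguments.
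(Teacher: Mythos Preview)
Your proof is correct and follows essentially the same route as the paper's: both hinge on the observation $\mathcal{K}_M = \mathcal{K}_{\overline{M}}$ and then apply Theorem~\ref{c} with $\overline{M}$ in place of $M$ for the implication $(2)\Rightarrow(3)$. The paper simply declares $(1)\Rightarrow(2)$ and $(3)\Rightarrow(1)$ to be obvious, whereas you spell them out (invoking Theorem~\ref{b} for $(1)\Rightarrow(2)$ is harmless overkill, since any quadratic module containing an archimedean one is trivially archimedean); your extra care about the topology on $\mathcal{K}_M$ in $(3)\Rightarrow(1)$ is appropriate and matches the implicit convention from the proof of Theorem~\ref{c}.
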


\begin{proof} (1) $\Rightarrow$ (2) and (3) $\Rightarrow$ (1) are obvious. If $\alpha \in \mathcal{K}_M$ then $\alpha^{-1}(\Bbb{R}^+)$ is closed and $M \subseteq \alpha^{-1}(\Bbb{R}^+)$, so $\overline{M} \subseteq \alpha^{-1}(\Bbb{R}^+)$. This proves that $\mathcal{K}_M = \mathcal{K}_{\overline{M}}$. The implication (2) $\Rightarrow$ (3) follows from this observation, by applying Theorem \ref{c} to the quadratic module $N = \overline{M}$.
\end{proof}

Note: Since $\mathcal{K}_M = \mathcal{K}_{\overline{M}}$, one sees now that Corollary \ref{A} is not really a statement about the quadratic module $M$, but rather it is a statement about the closed quadratic module  $\overline{M}$.

Clearly $M$ archimedean $\Rightarrow$ $\overline{M}$ archimedean $\Rightarrow$ $\mathcal{K}_M$ compact.
We conclude by giving concrete examples to show that $\mathcal{K}_M$ compact $\not\Rightarrow$ $\overline{M}$ archimedean and $\overline{M}$ archimedean $\not\Rightarrow$ $M$ archimedean:

\begin{ex}\label{3.3} Let $A := \Bbb{R}[\underline{x}]$, $n\ge 2$, $M :=$ the quadratic module of $\Bbb{R}[\underline{x}]$ generated by $$x_1-1, \cdots, x_n-1, c-\prod_{i=1}^n x_i,$$ where $c$ is a positive real constant. Then $\mathcal{K}_M$ is compact (possibly empty, depending on the value of $c$), but, as explained in \cite{JP}, $M$ is not archimedean. As pointed out in \cite{N2} (also see \cite{M}) $M$ is also stable, so $\overline{M} = M$, by Theorem \ref{2.6}. 
\end{ex}

\begin{ex}\label{3.4} Let $A:= \Bbb{R}[\underline{x}]$, $n\ge 2$,
$M:=$ the quadratic module of $\Bbb{R}[\underline{x}]$ generated by
$$1-x_1,\dots,1-x_n, \prod_{i=1}^nx_i-c, x_1x_n^2, x_1x_2x_n^2,\dots, x_1\cdots x_{n-1}x_n^2,$$
where $c$ is a positive real constant. In this example, $\mathcal{K}_M$ is compact, $M$ is not archimedean,
but $\overline{M} = \operatorname{Pos}(\mathcal{K}_M)$, so $\overline{M}$ is archimedean. 
One checks that $0<x_1\le 1$ on $\mathcal{Y}_M$ so, by Theorem \ref{new},
$\overline{M} = \cap_{0\le \lambda \le 1} \overline{M_{\lambda}}$ where
$M_{\lambda} := M+(x_1-\lambda)$ so, to prove $M$ satisfies (SMP) it suffices to prove each
$M_{\lambda}$ satisfies (SMP). Exploiting the natural isomorphism
$\frac{\Bbb{R}[\underline{x}]}{(x_1-\lambda)} \cong \Bbb{R}[x_2,\dots,x_n]$, this reduces to showing that the quadratic module $N_{\lambda}$ of $\Bbb{R}[x_2,\dots,x_n]$ generated by $$1-\lambda,1-x_2,\dots, 1-x_n, \lambda\prod_{i=2}^n x_i-c,\lambda x_n^2,\lambda x_2x_n^2,\dots, \lambda x_2\dots x_{n-1}x_n^2$$ satisfies (SMP). If $\lambda =0$ then $-1\in N_{\lambda}$ so this is true for trivial reasons. If $0<\lambda \le 1$ then $N_{\lambda}$ is generated by $$1-x_2,\dots,1-x_n, \prod_{i=2}^n x_i-\frac{c}{\lambda},x_2x_n^2,\dots, x_2\dots x_{n-1}x_n^2,$$ and $N_{\lambda}$ satisfies (SMP) by induction on $n$. This proves $M$ satisfies (SMP). To show $M$ is not archimedean it suffices to show $k^2-x_1^2 \notin M$ for each real $k$. Taking $x_2=\dots = x_{n-1} = 1$, this reduces to the case $n=2$ and, in this case, it can be verified by an easy degree argument (considering terms of highest degree). But actually, one can say more. Using a valuation-theoretic argument one can show that the only elements of $\Bbb{R}[\underline{x}]$ which are bounded on $\mathcal{Y}_M$ are the elements in $\Bbb{R}[x_1]$. Using this, one checks that the only elements $f$ of $\Bbb{R}[\underline{x}]$ satisfying $k^2-f^2 \in M$ for some real constant $k$ are the elements of $\Bbb{R}$.
\end{ex}

Note: There is a valuation-theoretic criterion for deciding when $M$ is archimedean, given that $\mathcal{K}_M$ is compact; see \cite{JP} or \cite{M}. But typically this does not apply to $\overline{M}$, because $\overline{M}$ is not finitely generated.

\section{Computation of $\overline{M}$ in Special Cases}

If $\dim(\frac{A}{M\cap -M}) \le 1$, Theorems \ref{2.6} and \ref{new}
combine to yield a recursive  description of $\overline{M}$. This
is a consequence of the following result:

\begin{thm}\label{4.1} Let $A$ be finitely generated and suppose the finitely generated quadratic module $M$ fulfills $\dim(\frac{A}{M\cap -M}) \le 1$.
 If the only elements of $A$ bounded
on $\mathcal{K}_M$ are the elements of $\Bbb{R} + M\cap -M$ then $M$ is stable.
\end{thm}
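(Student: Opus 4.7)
The strategy is to translate stability into a concrete degree bound on representations and to exploit the absence of non-trivial bounded elements via a leading-form argument on the one-dimensional quotient $\bar A:=A/(M\cap -M)$.

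Fix $\RR$-algebra generators $x_1,\dots,x_n$ of $A$ and the generators $g_1,\dots,g_s$ of $M$, and let $A_d$ denote the subspace of elements of total degree at most $d$. Stability amounts to the existence, for every $d$, of a $d'$ such that every $f\in M\cap A_d$ has a representation $f=\sigma_0+\sum_i\sigma_i g_i$ with $\sigma_i\in\sum A^2$ and with $\sigma_0$ and each $\sigma_i g_i$ lying in $A_{d'}$. I would first reduce to the case $M\cap -M=(0)$ by passing to $\bar A$: the finitely generated ideal $M\cap -M$ of the noetherian ring $A$ contributes elements of $M$ whose representations can be controlled uniformly in degree (using $\pm h\in M$ for $h$ in the support together with a fixed finite set of generators of the ideal), while the hypotheses $\dim(\bar A)\le 1$ and ``only constants are bounded on $\mathcal K_M=\mathcal K_{\bar M}$'' descend cleanly to $\bar A$.

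The heart of the argument takes place on the affine curve $X=\operatorname{Spec}(\bar A)$, whose projective completion has only finitely many points at infinity. I would argue by contradiction: suppose stability fails at some $d$, so there is a sequence $f^{(k)}\in M\cap A_d$ whose shortest representations $f^{(k)}=\sigma_0^{(k)}+\sum_i\sigma_i^{(k)}g_i$ have $\max_i\deg(\sigma_i^{(k)}g_i)\to\infty$. With respect to a degree (equivalently, valuation) filtration adapted to the points at infinity, the top-degree parts of $\sigma_0^{(k)}$ and of the $\sigma_i^{(k)}g_i$ must mutually cancel in order to produce the fixed-degree element $f^{(k)}$. Since each $\sigma_i^{(k)}$ is a sum of squares, its leading form is itself a sum of squares, hence non-negative on every branch of $X$ at infinity that meets $\mathcal K_M$; combining this sign information with the forced cancellation, one extracts a non-constant element of $\bar A$ whose asymptotic growth is dominated by $\deg(f^{(k)})\le d$ at every point at infinity in the closure of $\mathcal K_M$, and hence is bounded on $\mathcal K_M$. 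This contradicts the hypothesis that the only elements of $\bar A$ bounded on $\mathcal K_M$ are the constants.

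The main obstacle is to make this leading-form cancellation argument precise: one must select the correct valuation at each point at infinity of the projective closure of the curve, track the sign behaviour of each $g_i$ on each branch of $\mathcal K_M$ at infinity, and convert a formal cancellation of leading forms (which involves SOS multipliers of potentially unbounded degree) into an actual bounded non-constant element of $\bar A$. The one-dimensional hypothesis is essential here because the set of points at infinity is finite and discrete, permitting a local Laurent-type analysis branch by branch; this clean argument is unavailable in higher dimension, in accord with the failure of stability for $\sum\RR[\underline x]^2$ when $n\ge 2$ combined with Theorem \ref{2.6}.
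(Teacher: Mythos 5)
Your setup (reduce to $M\cap -M=\{0\}$, pass to the finitely many valuations at infinity of the one\--dimensional quotient, use that leading forms of sums of squares are non-negative) matches the paper's, but the engine of your argument is not the one that works, and the place where you invoke the hypothesis is not where it is actually needed. First, the ``forced cancellation of top-degree parts'' you want to exploit does not occur: at each valuation $v$ at infinity compatible with an ordering lying over $M$, the elements $\sigma_i g_i$ are all non-negative in that ordering, and for a compatible valuation one then has $v(f)=\min_i v(\sigma_i g_i)$ --- the leading terms cannot cancel precisely because their angular components are all positive. So there is no cancellation from which to ``extract a non-constant bounded element,'' and no contradiction of the kind you describe; instead one gets directly the valuation bound $v(\sigma_i)\ge -d-v(g_i)$. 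The real question is what such a bound buys you, and this is where the hypothesis does its main work in the paper: Claim~1 plus the hypothesis give $V_{S_\infty,0}=\RR$, and Claim~2 then shows that each $V_{S_\infty,d}=\{h: v(h)\ge d\ \forall v\in S_\infty\}$ is finite-dimensional and that these spaces exhaust $A$. Without this, a bound on the valuations of the $\sigma_i$ at the finitely many points at infinity over $\mathcal{K}_M$ does not confine them to a finite-dimensional subspace and therefore does not yield stability. Your proposal never addresses this step.

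Second, you treat the quotient as a single irreducible curve, whereas $A/(M\cap -M)$ may have several minimal primes; the paper explicitly flags the resulting zero-divisor problem as the main complication. If $g_i$ vanishes on some components, the relevant valuations for $\sigma_i$ are only those in $S_\infty^{(i)}$ (components where $g_i\ne 0$), and the space of $h$ with bounded valuations there is genuinely infinite-dimensional. The paper handles this by writing each square root $h_{ip}$ of a summand of $\sigma_i$ as $t_{ip}+u_{ip}$ with $t_{ip}$ in a finite-dimensional complement and $v(u_{ip})$ large on $S_\infty^{(i)}$, and then showing $v(u_{ip}g_i)>0$ for every $v\in S_\infty$, so that $u_{ip}g_i$ is bounded on $\mathcal{K}_M$, hence (by the hypothesis, used here a second time) a constant, hence $0$. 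This is the decisive step your outline is missing, and it cannot be recovered from a leading-form cancellation argument alone.
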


A preordering version of Theorem \ref{4.1} appears already in \cite[Corollary 2.11]{Pl}.

\begin{proof} Replacing $A$ by $\frac{A}{M\cap -M}$ and $M$ by $\frac{M}{M\cap -M}$, and applying \cite[Lemma 3.9]{Sc} or arguing as in \cite[Lemma 4.1.1]{M},
 we are reduced to the case $M\cap -M = \{ 0 \}$. Since $A$ is noetherian there are just finitely many minimal primes of $A$.
 Let $\frak{p}$ be a minimal prime of $A$, $\kappa(\frak{p}) := \operatorname{ff}(\frac{A}{\frak{p}})$. According to \cite[Proposition 2.1.7]{M},
  $(M+\frak{p})\cap -(M+\frak{p}) = \frak{p}$, i.e., $M$ extends to a proper preordering of $\kappa(\frak{p})$. $\dim(\frac{A}{\frak{p}})$ is
  either $0$ or $1$. For $\dim(\frac{A}{\frak{p}}) = 1$ let $S_{\infty,\frak{p}}$ denote the set of valuations $v\ne 0$ of $\kappa(\frak{p})$
  compatible with some ordering of $\kappa(\frak{p})$ lying over the extension of $M$ to $\kappa(\frak{p})$ and such that $\frac{A}{\frak{p}} \nsubseteq B_v$,
  where $B_v \subseteq \kappa(\frak{p})$ is the valuation ring of $v$. By Noether Normalization $\exists$ $t \in \frac{A}{\frak{p}}$ transcendental over $\Bbb{R}$
   such that $\frac{A}{\frak{p}}$ is integral over $\Bbb{R}[t]$. Since $B_v$ is integrally closed, $\frac{A}{\frak{p}} \nsubseteq B_v$ $\Leftrightarrow$ $t\notin B_v$
   $\Rightarrow$ $v$ is one of the extensions of the discrete valuation $v_{\infty}$ of $\Bbb{R}(t)$. Since $[\kappa(\frak{p}) : \Bbb{R}(t)] <\infty$, the set
   $S_{\infty, \frak{p}}$ is finite and each $v\in S_{\infty,\frak{p}}$ is discrete with residue field $\Bbb{R}$. Let $S_{\infty} :=$ the union of the sets
   $S_{\infty,\frak{p}}$, $\frak{p}$ running through the mimimal primes of $A$ with $\dim(\frac{A}{\frak{p}}) = 1$. Thus $S_{\infty}$ is finite. View elements
   of $S_{\infty}$ as functions $v : A \rightarrow \Bbb{Z}\cup \{ \infty\}$ by defining $v(f) = v(f+\frak{p})$ if $v \in S_{\infty,\frak{p}}$.

If $\mathcal{K}_{M+\frak{p}}$ is compact then every $f\in A$ is bounded on $\mathcal{K}_{M+\frak{p}}$ so either $\dim(\frac{A}{\frak{p}}) = 0$ or
$\dim(\frac{A}{\frak{p}})=1$ and $S_{\infty,\frak{p}} = \emptyset$. If $\mathcal{K}_{M+\frak{p}}$ is not compact then $\dim(\frac{A}{\frak{p}}) = 1$,
$S_{\infty,\frak{p}} \ne \emptyset$, and $f\in A$ is bounded on $\mathcal{K}_{M+\frak{p}}$ iff $v(f) \ge 0$ for all $v\in S_{\infty,\frak{p}}$.
(This uses  the compactness of the real spectrum.) Anyway, since $\mathcal{K}_M$ is the union of the $\mathcal{K}_{M+\frak{p}}$, we have established the following:

\smallskip
Claim 1: $f\in A$ is bounded on $\mathcal{K}_M$ iff $v(f)\ge 0$ holds for all $v \in S_{\infty}$.

\smallskip
If $S_{\infty} = \emptyset$ then every element of $A$ is bounded on $\mathcal{K}_M$, by Claim 1, so, by hypothesis, $A = \Bbb{R}\cdot 1$ (i.e., either
$A = M= \{ 0 \}$ or $A =\Bbb{R}$ and $M = \Bbb{R}^+$). In this case $M$ is obviously stable. Thus we may assume $S_{\infty} \ne \emptyset$. Let $\frak{p}_1,\dots,\frak{p}_k$ be the minimal primes of $A$ with $\dim(\frac{A}{\frak{p}_i}) =1$ and $S_{\infty,\frak{p}_i} \ne \emptyset$. 
Since $S_{\infty} \ne \emptyset$, $k\ge 1$. If $f \in \cap_{i=1}^k \frak{p}_i$ then $v(f) = \infty$ for all $v\in S_{\infty}$ so, by Claim 1, $f$ is
bounded on $\mathcal{K}_M$, so, by hypothesis, $f \in \Bbb{R}$. Since $k\ge 1$, this forces $f=0$. This proves $\cap_{i=1}^k \frak{p}_i = \{ 0 \}$, i.e.,
$\sqrt{\{ 0 \}} = \{ 0 \}$ and $\{ \frak{p}_i \mid i=1,\dots,k\}$ is the complete set of minimal primes of $A$.

For any non-empty subset $S$ of $S_{\infty}$ and any integer $d$,
let $$V_{S,d} := \{ f \in A \mid v(f)\ge d \ \forall v\in S\}.$$
$V_{S,d}$ is clearly an $\Bbb{R}$-subspace of $A$.

\smallskip

Claim 2: If $d<e$ then $V_{S,d}/V_{S,e}$ is finite dimensional.
\smallskip

Consider all pairs $(T,n)$ where $T$ is a non-empty subset of $S$
and $d\le n <e$ such that there exists an element $g \in A$ with
$v(g)=n$ for all $v\in T$ and $v(g)>n$ for $v\in S\backslash T$.
Fix such an element $g = g_{T,n}$ for each such pair. To prove
Claim 2 it suffices to show that these elements generate $V_{S,d}$
modulo $V_{S,e}$. This is pretty clear. Suppose $f\in V_{S,d}$.
Let $n = \min\{ v(f) \mid v\in S\}$, so $n\ge d$. If $n\ge e$ then
$f \in V_{S,e}$. Suppose $n<e$. Let $T = \{ v \in S \mid
v(f)=n\}$. Thus $T \ne \emptyset$. Fix $v_0 \in T$. Since the
residue field of $v_0$ is $\Bbb{R}$, there is some $a\in \Bbb{R}$
such that $v_0(f-ag_{T,n}) >n$. Let $f' = f-ag_{T,n}$, i.e., $f =
ag_{T,n}+f'$. Now repeat the process, working with $f'$ instead of
$f$. Either $\min \{ v(f') \mid v\in S \} >n$ or $\min\{ v(f')
\mid v\in S\} = n$ and $T' = \{ v \in S \mid v(f')=n\}$ is
non-empty and properly contained in $T$ (because $v_0 \in T$, $v_0
\notin T'$). Anyway, the process terminates after finitely many
steps. This proves Claim 2.

By Claim 1, $V_{S_{\infty},0} = \Bbb{R}$. Combining this with Claim 2, we see that $V_{S_{\infty},d}$ is finite dimensional for
each $d\le 0$. Clearly $A = \cup_{d\le 0} V_{S_{\infty},d}$.

Fix generators $g_1,\dots,g_t$ for $M$. We may assume each $g_i$
is $\ne 0$. Complications arise from the fact that $k$ may be
strictly greater than $1$, so some of the $g_i$ may be divisors of
zero. We need some notation: Let $g_0 := 1$. For $0\le i \le s$,
denote by $S_{\infty}^{(i)}$ the union of the sets
$S_{\infty,\frak{p}_j}$ such that $1\le j \le k$ and $g_i \notin
\frak{p}_j$. Thus $S_{\infty}^{(0)} = S_{\infty}$. Let $e_i :=
\max\{ v(g_i) \mid v\in S_{\infty}^{(i)}\}$.  Let $e_i' := \min\{
v(g_i) \mid v\in S_{\infty}^{(i)}\}$. Note that $S_{\infty}^{(i)}
\ne \emptyset$ so $e_i \ne +\infty$. Fix $d\ge 0$ and let
$W^{(i)}$ be a f.d. vector subspace of $A$ which generates
$V_{S_{\infty}^{(i)}, \frac{-d-e_i}{2}}$ modulo
$V_{S_{\infty}^{(i)}, -e_i'+1}$. This exists by Claim 2. To
complete the proof it suffices to prove:

\smallskip
Claim 3: Each $f \in M\cap V_{S_{\infty},-d}$ is expressible in the form $f=\sum_{i=0}^s \tau_ig_i$ where $\tau_i$ a sum of squares of elements
of $W^{(i)}$, $i=0,\dots,s$.

\smallskip
Let $f \in V_{S_{\infty},-d}$, $f = \sum_{i=0}^s \sigma_ig_i$,
$\sigma_i \in \sum A^2$. Then $-d \le v(f) = \min\{ v(\sigma_ig_i)
\mid i=0,\dots,s\}$, i.e., $v(\sigma_ig_i) \ge -d$ $\forall$ $v
\in S_{\infty}$. For $v\in S_{\infty}^{(i)}$ this yields
$v(\sigma_i) \ge -d-v(g_i) \ge -d-e_i$, by definition of $e_i$.
Express $\sigma_i$ as $\sigma_i = \sum h_{ip}^2$. Then
$v(h_{ip}^2) \ge -d-e_i$, i.e., $v(h_{ip}) \ge \frac{-d-e_i}{2}$
$\forall$ $v \in S_{\infty}^{(i)}$. Decompose $h_{ip}$ as $h_{ip}
= t_{ip}+ u_{ip}$ with $t_{ip} \in W^{(i)}$, $u_{ip} \in
V_{S_{\infty}^{(i)}, -e_i'+1}$. Then $h_{ip}^2 =
t_{ip}^2+2t_{ip}u_{ip}+u_{ip}^2= t_{ip}^2+(2t_{ip}+u_{ip})u_{ip}$,
so $h_{ip}^2g_i = t_{ip}^2g_i+(2t_{ip}+u_{ip})u_{ip}g_i$. One
checks that $v(u_{ip}g_i)>0$ for all $v\in S_{\infty}$. If
$v\notin S_{\infty}^{(i)}$ then $v(g_i)=\infty$, $v(u_{ip}g_i) =
\infty$, so this is clear. If $v\in S_{\infty}^{(i)}$, then
$v(u_{ip}) > -e_i'$, so $v(u_{ip}g_i) >-e_i'+v(g_i) \ge 0$ by
definition of $e_i'$.  According to Claim 1 and our hypothesis
this implies $u_{ip}g_i =0$. Thus $h_{ip}^2g_i = t_{ip}^2g_i$ and
$\sigma_ig_i = \tau_ig_i$ where $\tau_i := \sum t_{ip}^2$.
\end{proof}
 The
conclusion of Theorem \ref{4.1} is false if $\dim(\frac{A}{M\cap
-M})\geq 2$:

\begin{ex}\label{4.2} Let $A = \Bbb{R}[x,y]$, let $M$ be the preordering of $\Bbb{R}[x,y]$ generated by $(1-x)xy^2$, and let $N$ be the preordering of
$\Bbb{R}[x,y]$ generated by $(1-x)x$. $\mathcal{K}_N$ is the strip
$[0,1]\times \Bbb{R}$. $\mathcal{K}_M$ is the strip together with
the $x$-axis. Applying Schm\"udgen's fibre theorem (Theorem
\ref{new}) we see that $\overline{N} =
\operatorname{Pos}(\mathcal{K}_N)$. In fact, one even has $N =
\operatorname{Pos}(\mathcal{K}_N)$; see \cite{M1}. According to
\cite[Theorem 5.4]{Sc}, this implies that $N$ is not stable. On
the other hand, $y^2N \subseteq M$, so if $M$ were stable then $N$
would also be stable. (If $f\in N$ then $y^2f \in M$. If $M$ were
stable we would have $y^2f = \sigma+\tau (1-x)xy^2$, $\sigma, \tau
\in \sum \Bbb{R}[x,y]^2$ with degree bounds on $\sigma$ and $\tau$
depending only on $\deg(f)$. Clearly $\sigma =y^2\sigma_1$ for
some $\sigma_1\in \sum \Bbb{R}[x,y]^2$, so this would yield
$f=\sigma_1+\tau(1-x)x$ with degree bounds on $\sigma_1,\tau$
depending only on $\deg(f)$). This proves that $M$ is not stable.
On the other hand, the elements of $\Bbb{R}[x,y]$ bounded on
$\mathcal{K}_N$ are precisely the elements of $\Bbb{R}[x]$. Since
the only elements of $\Bbb{R}[x]$ bounded on the $x$-axis are the
elements of $\Bbb{R}$, this proves that the only elements of
$\Bbb{R}[x,y]$ bounded on $\mathcal{K}_M$ are the elements of
$\Bbb{R}$. We remark that even though $M$ is not stable, it might
still be closed.
\end{ex}

We can strengthen the example in the following way:

\begin{ex}\label{couex}  Let $A = \Bbb{R}[x,y]$, let $M$ be the preordering of $\Bbb{R}[x,y]$ generated by $(1-x)x^3y^2$ and let $N$ be the preordering of
$\Bbb{R}[x,y]$ generated by $(1-x)x^3$. Again $\mathcal{K}_N$ is
the strip $[0,1]\times \Bbb{R}$ and $\mathcal{K}_M$ is the strip
together with the positive part of the $x$-axis. Theorem \ref{new}
again shows  that $\overline{N} =
\operatorname{Pos}(\mathcal{K}_N)$. However, we have
$x\in\operatorname{Pos}(\mathcal{K}_N)\setminus N$. Indeed writing
down a possible representation of  $x$ in $N$ and evaluating in
$y=0$ gives such a representation for $x$ in $\Bbb{R}[x]$;
evaluating in $x=0$ then shows that $x^2$ divides $x$, a
contradiction. So $N$ can not be closed.

We have $N=\left\{f\in\Bbb{R}[x,y]\mid y^2f\in M\right\}$, with
the same argument as in the preceding example. Now since $N$ is
not closed and the mapping $f\mapsto y^2f$ is linear and therefore
continuous, $M$ can not be closed (so in view of Theorem
\ref{2.6}, $M$ can also not be stable). On the other hand the only
polynomials bounded on $\mathcal{K}_M$ (or $\mathcal{Y}_M$) are
the reals.
\end{ex}

Open problem 1 in \cite[p. 85]{Pl} should be mentioned in this context. It is asked  there whether the absence of nontrivial bounded polynomials implies stability of the quadratic module, at least if the semialgebraic set is regular at infinity. Our example does not answer the question, since $\mathcal{K}_M$ is not regular at infinity, i.e. it is not the union of a compact set and a set that is the closure of its interior. So the  question is still open.

For polyhedra however, the following result is true:

\begin{theorem}\label{poly} Let $A=\Bbb{R}[\underline{x}]$ and suppose $M$ is
generated by finitely many linear polynomials. Suppose the only
linear polynomials that are bounded on $\mathcal{K}_M$ are from $\Bbb{R} + M\cap -M$. Then $M$ is stable.
\end{theorem}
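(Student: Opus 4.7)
The plan is to adapt the valuation-theoretic filtration argument from the proof of Theorem~\ref{4.1}, replacing the set $S_\infty$ (whose finiteness there relied on $\dim(A/(M\cap -M))\le 1$) with a finite collection of weighted-degree valuations coming from the recession cone of $\mathcal{K}_M$. Linearity of the generators makes these valuations explicit.

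First I would reduce to $M\cap -M = \{0\}$. Since $M$ is generated by linear polynomials $\ell_1,\ldots,\ell_s$, the ideal $M\cap -M$ is itself generated by linear polynomials, so the quotient $\bar A := A/(M\cap -M)$ is again a polynomial ring (in fewer variables) carrying a linearly generated quadratic module $\bar M$, and the hypothesis transfers. Standard arguments analogous to those opening the proof of Theorem~\ref{4.1} reduce stability of $M$ to stability of $\bar M$. After this reduction, the recession cone $C := \{v\in\mathbb{R}^n : \tilde{\ell}_i(v)\ge 0,\,\forall i\}$ (with $\tilde{\ell}_i$ the homogeneous part of $\ell_i$) has full linear span in $\mathbb{R}^n$: otherwise a nonzero linear form vanishing on $\mathrm{span}(C)$ would be bounded on $\mathcal{K}_M$ by polyhedral duality, contradicting the hypothesis. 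This in turn upgrades the hypothesis to the stronger statement that \emph{only constants are bounded on $\mathcal{K}_M$}: along a ray $x_0+tw$ with $w$ generic in the relative interior of $C$, every non-constant polynomial grows unboundedly, and translation invariance along the lineality space of $C$ does not help create non-constant bounded polynomials.

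Next, let $v_1,\ldots,v_m$ represent the extreme rays of $C$ (including $\pm e$ for each lineality direction $e$), and define weighted degrees $W_j(f) := \max\{v_j\cdot\alpha : c_\alpha\neq 0\}$ for $f = \sum c_\alpha x^\alpha$. These play the role of $S_\infty$: a polynomial $f$ is bounded on $\mathcal{K}_M$ iff $W_j(f)\le 0$ for all $j$, so the subspaces $V_d := \{f : W_j(f)\le d\text{ for all } j\}$ are finite-dimensional for each $d\ge 0$ (the polyhedral analogue of Claim~2 in the proof of Theorem~\ref{4.1}, with the base case $d=0$ delivered by the upgraded hypothesis). Each generator $\ell_i$ has $\max_j W_j(\ell_i) > 0$, unless $\ell_i$ is a nonnegative constant, in which case it can be absorbed into $\sigma_0$.

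Now I would mimic Claim~3 of the proof of Theorem~\ref{4.1}: given $f\in M \cap V_d$, start from any SOS representation $f = \sum \sigma_i \ell_i$ with $\sigma_i = \sum_p h_{ip}^2$, and decompose $h_{ip} = t_{ip} + u_{ip}$ with $t_{ip}$ in a prescribed finite-dimensional subspace $W^{(i)}$ chosen to accommodate all $W_j$ simultaneously, and $u_{ip}$ of strictly higher weighted degree. The excess contribution $u_{ip}\ell_i$ satisfies $W_j(u_{ip}\ell_i) > 0$ for all $j$, so is unbounded on $\mathcal{K}_M$; but since the total excess sums to an element of $V_d$, the upgraded hypothesis forces $u_{ip}\ell_i = 0$, and hence $u_{ip}=0$ as $A$ is a domain. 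The hard part will be making this cancellation rigorous in the multi-valuation setting: one must verify that the top $W_j$-forms of $\sum\sigma_i\ell_i$ are controlled simultaneously across all $j$, so that no cancellation of excess among summands can occur inside any single sum of squares. The positivity $\tilde{\ell}_i(v_j)\ge 0$ for $v_j\in C$ is the key feature that permits a clean leading-form argument in the weighted graded ring and yields the required finite-dimensional representation.
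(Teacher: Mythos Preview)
Your reduction to $M\cap -M=\{0\}$ and the observation that the recession cone $C$ spans $\mathbb{R}^n$ (hence, being convex, is full-dimensional) are correct and parallel the paper. But the weighted-degree filtration you build on this does not work. Take $n=2$ and $M$ generated by $-x_1,-x_2$, so $\mathcal{K}_M$ is the closed third quadrant and $C=\mathcal{K}_M$; the hypothesis of the theorem is satisfied and $M\cap -M=\{0\}$. The extreme rays of $C$ are $v_1=(-1,0)$ and $v_2=(0,-1)$, and for every monomial $\underline{x}^\alpha$ with $\alpha\in\mathbb{N}^2$ one has $v_j\cdot\alpha\le 0$. Thus $W_j(f)\le 0$ for \emph{every} polynomial $f$, so $V_0=A$ is infinite-dimensional and your filtration is trivial. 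The asserted equivalence ``$f$ bounded on $\mathcal{K}_M$ iff $W_j(f)\le 0$ for all $j$'' is simply false: the quantity $\max\{v\cdot\alpha:c_\alpha\ne 0\}$ does not measure the growth of $f$ along the ray $x_0+tv$ (that growth is governed by the ordinary degree of $f$ in the variables $x_k$ with $v_k\ne 0$), so the analogy with the genuine valuations in Theorem~\ref{4.1}---where compatibility with an ordering is what prevents cancellation---breaks down. Your final step has an independent problem: even if each individual excess term $u_{ip}\ell_i$ were unbounded, a sum of unbounded terms can perfectly well lie in $V_d$, so nothing forces $u_{ip}\ell_i=0$.

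The paper's argument is much shorter and of a different nature. After the same reduction and a translation so that $0\in\mathcal{K}_M$, it splits the non-constant generators into $p_i=c_i+\tilde p_i$ with $c_i>0$ and $q_j$ with $q_j(0)=0$, and shows that the homogeneous linear forms $\tilde p_1,\dots,\tilde p_r,q_1,\dots,q_s$ are positively linearly independent: a nontrivial nonnegative relation with some $\lambda_1>0$ would yield $\lambda_1 p_1\in M$ and $N-\lambda_1 p_1\in M$ for a constant $N$, making $p_1$ a bounded non-constant linear polynomial, contrary to hypothesis. A theorem of alternatives then produces a point $d\in\mathbb{R}^n$ at which all $\tilde p_i$ and $q_j$ are strictly positive, so $\mathcal{K}_M$ contains a full-dimensional cone, and stability follows from \cite{KM}, \cite{N2}, \cite{PS}. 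If you insist on a filtration argument, the right object is a \emph{single} grading coming from such an interior direction $d$ (equivalently ordinary total degree after a linear change of coordinates): then the leading form of each $\ell_i$ is strictly positive at $d$, and the no-cancellation argument for sums of squares goes through---which is precisely what those cited references establish.
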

\begin{proof}If $\mathcal{K}_M$ has empty interior, then it lies in a strict affine subspace of $\Bbb{R}^n$.
Any linear polynomial vanishing on this subspace belongs to $M\cap -M$, by \cite[Lemma 7.1.5]{M}. So as explained before we
can assume that $\mathcal{K}_M$ has non-empty interior, and so
$M\cap -M=\{0\}.$

 Without loss of generality $0\in\mathcal{K}_M$. Group
the non constant linear generators of $M$ so that
$$p_1(0),\ldots,p_r(0)>0$$ and
$$q_1(0)=\cdots=q_s(0)=0.$$ Write $p_i=c_i +\widetilde{p}_i$ with $c_i\in\Bbb{R}_{>0}$ and
$\widetilde{p}_i(0)=0, \widetilde{p}_i\neq 0$. All
$\widetilde{p}_i$ and $q_j$ are homogeneous polynomials of degree
one. We claim that
$\widetilde{p}_1,\ldots,\widetilde{p}_r,q_1,\ldots,q_s$ are
positively linear independent. So assume $$\sum_i
\lambda_i\widetilde{p}_i +\sum_j \gamma_j q_j=0$$ for some
nonnegative coefficients $\lambda_i, \gamma_j$, not all zero. Then
some $\lambda_i$ must be nonzero, since $M\cap -M=\{0\}.$ Assume
$\lambda_1>0$. With $N:=\sum_i\lambda_ic_i$ we have
$\lambda_1p_1,N-\lambda_1p_1 \in M$. So by our assumption
$p_1\in\Bbb{R}$, a contradiction. This proves the claim.

So there must be a point $d\in\Bbb{R}^n$ where all
$\widetilde{p}_1,\ldots,\widetilde{p}_r,q_1,\ldots,q_s$ are
strictly positive (Theorem of alternatives for strict linear inequalities \cite[Example 2.21]{BV}). Thus $\mathcal{K}_M$
contains a full dimensional cone, and so $M$ is stable (see
\cite{KM} or \cite{N2} or \cite{PS}).
\end{proof}

We define the \it weak closure \rm $\widetilde{M}$ of a quadratic
module $M$ of $A$. Informally, $\widetilde{M}$ is the part of
$\overline{M}$ that
 can be `seen' by applying Theorem \ref{new} recursively. 
Formally, we define $\widetilde{M}$ as follows:
\begin{enumerate}
\item If $M=A$ then $\widetilde{M} = M$. \item If the only
elements of $A$ bounded on $\mathcal{Y}_M$ are the elements of
$\Bbb{R}+M\cap -M$, then $\widetilde{M} = M$. \item If some $f \in
A$ is bounded on $\mathcal{Y}_M$, say $a\le f\le b$ on
$\mathcal{Y}_M$, and $f \notin \Bbb{R}+M\cap -M$, then
$\widetilde{M} = \cap_{a\le \lambda \le b}
\widetilde{M_{\lambda}}$, where $M_{\lambda} := M+(f-\lambda)$.
\end{enumerate}

Note: Although case (1) is included for clarity, it can also be viewed as a special case of (2). It is also important to note that the description of $\widetilde{M}$ given in (3) holds trivially if $f \in \Bbb{R}+M\cap -M$ (in the sense that if $f=\lambda_0+g$, $\lambda_0 \in \Bbb{R}$, $g\in M\cap-M$, then $M_{\lambda} = M$ if $\lambda = \lambda_0$ and $M_{\lambda} = A$ if $\lambda \ne \lambda_0$).

\begin{thm}\label{4.3}Let $A$ be finitely generated. Then for every quadratic module $M$ of $A$,
\begin{enumerate}
\item $\widetilde{M}$ is a well-defined quadratic module of $A$.
\item $M \subseteq \widetilde{M} \subseteq \overline{M}$.
\end{enumerate}
\end{thm}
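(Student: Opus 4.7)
The plan is to prove both assertions simultaneously by well-founded induction on the support ideal $M \cap -M$, exploiting the fact that $A$ is Noetherian. The crucial preliminary observation is that case (3) of the definition strictly enlarges the support: if $f \notin \mathbb{R} + M \cap -M$, then for every $\lambda \in \mathbb{R}$ the element $f - \lambda$ lies in $M_\lambda \cap -M_\lambda$ (since the principal ideal $(f-\lambda)$ is contained in $M_\lambda$), yet $f - \lambda \notin M \cap -M$ (otherwise $f \in \mathbb{R} + M \cap -M$). Hence $M_\lambda \cap -M_\lambda \supsetneq M \cap -M$, and so the recursion is well-founded by the ascending chain condition on $A$.

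In the base cases (1) and (2) of the definition, $\widetilde{M} = M$ is trivially a quadratic module satisfying $M \subseteq \widetilde{M} \subseteq \overline{M}$. For the inductive step, assume the conclusions hold for every quadratic module with strictly larger support. An intersection of quadratic modules is a quadratic module, so $\widetilde{M} = \bigcap_\lambda \widetilde{M_\lambda}$ is a quadratic module. The inclusion $M \subseteq \widetilde{M}$ is immediate from $M \subseteq M_\lambda \subseteq \widetilde{M_\lambda}$. For the other inclusion, Theorem \ref{new}(1) yields $b - f,\ f - a \in M^\ddagger \subseteq \overline{M}$, and then Theorem \ref{new}(2)---applicable since $A$, being finitely generated, has countable vector space dimension---gives $\overline{M} = \bigcap_{a \le \lambda \le b} \overline{M_\lambda}$; combining this with $\widetilde{M_\lambda} \subseteq \overline{M_\lambda}$ yields $\widetilde{M} \subseteq \overline{M}$.

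The main obstacle is showing that the value produced by case (3) is independent of the choice of $f$. Given two elements $f_1, f_2 \in A$ bounded on $\mathcal{Y}_M$, set $\widetilde{M}^{(i)} := \bigcap_{\lambda_i} \widetilde{M + (f_i - \lambda_i)}$. Each $M + (f_1 - \lambda_1)$ has strictly larger support than $M$, so by the inductive hypothesis $\widetilde{M + (f_1 - \lambda_1)}$ is well-defined and may be computed via the recursion using $f_2$ as the distinguished element. If $f_2$ happens to lie in $\mathbb{R} + (M + (f_1 - \lambda_1)) \cap -(M + (f_1 - \lambda_1))$, then the Note following the definition guarantees that the formula of case (3) with $f_2$ still reproduces $\widetilde{M + (f_1 - \lambda_1)}$; meanwhile, any $\lambda_2$ outside the range of $f_2$ on $\mathcal{Y}_{M + (f_1 - \lambda_1)}$ forces $M + (f_1 - \lambda_1) + (f_2 - \lambda_2) = A$ (since its space of semiorderings is empty) and thus contributes $A$ to the intersection. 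This yields
\[
\widetilde{M + (f_1 - \lambda_1)} = \bigcap_{\lambda_2} \widetilde{M + (f_1 - \lambda_1) + (f_2 - \lambda_2)},
\]
and symmetrically with the roles of $f_1$ and $f_2$ swapped. Commutativity of the double intersection then delivers
\[
\widetilde{M}^{(1)} = \bigcap_{\lambda_1, \lambda_2} \widetilde{M + (f_1 - \lambda_1) + (f_2 - \lambda_2)} = \widetilde{M}^{(2)},
\]
which completes the confluence argument and hence the proof.
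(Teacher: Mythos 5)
Your proof is correct and follows essentially the same route as the paper's: Noetherian induction on the support ideal $M\cap -M$, confluence of case (3) via the double intersection over the modules $M+(f_1-\lambda_1,f_2-\lambda_2)$, and Theorem \ref{new} for the inclusion $\widetilde{M}\subseteq\overline{M}$. (One cosmetic point: since Theorem \ref{new}(1) assumes the strict inequalities $a<f<b$, the hypothesis $a\le f\le b$ on $\mathcal{Y}_M$ yields $b-f,\,f-a\in(M^{\ddagger})^{\ddagger}\subseteq\overline{M}$ rather than $M^{\ddagger}$, which is all you need to invoke Theorem \ref{new}(2).)
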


\begin{proof} $A$ is noetherian, so if the above notion of $\widetilde{M}$ not well defined, then there is some quadratic module $M$ with $M\cap -M$
 maximal such that $\widetilde{M}$ is not a well-defined quadratic module. Obviously we are not in case (1) or (2), i.e., we are in case (3). Suppose we have
  $f, g\in A$ bounded on $\mathcal{Y}_M$, say $a\le f \le b$ and $c\le g \le d$ on $\mathcal{Y}_M$, $f,g \notin \Bbb{R}+M\cap -M$. By the
  maximality of $M\cap -M$, $\widetilde{M+(f-\lambda)}$ and $\widetilde{M+(g-\mu)}$ are well-defined, $a\le \lambda \le b$, $c\le \mu \le d$. We need to show $$\cap_{a\le \lambda\le b}\widetilde{M+(f-\lambda)} = \cap_{c\le \mu \le d} \widetilde{M+(g-\mu)}.$$ This follows easily from $\widetilde{M+(f-\lambda)} = \cap_{c\le \mu \le d} \widetilde{M+(f-\lambda, g-\mu)}$ and $\widetilde{M+(g-\mu)} = \cap_{a\le \lambda \le b}\widetilde{M+(f-\lambda,g-\mu)}$. These latter facts hold either by definition or  for trivial reasons.

Statement (2) is proven similar. To prove $\widetilde{M} \subseteq
\overline{M}$ one of course needs to use Theorem \ref{new}.
\end{proof}

In \cite[Lemma 3.13]{Sc} it is shown that $\sqrt{M\cap-M}
\subseteq \overline{M}$, for arbitrary $A$ and $M$. One can
improve on this as follows:

\begin{lemma}\label{4.4} $\sqrt{M\cap-M} \subseteq \widetilde{M}$.
\end{lemma}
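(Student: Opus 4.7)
The plan is a noetherian induction on the ideal $M \cap -M$, a device already used in the proof of Theorem \ref{4.3} and available because $A$ is noetherian. Suppose the lemma fails; pick a counterexample $M$ for which $M \cap -M$ is maximal among supports of counterexamples, and fix $f \in \sqrt{M \cap -M}\setminus \widetilde{M}$. The crucial preliminary observation is that $f$ vanishes on every $Q \in \mathcal{Y}_M$: the support $Q \cap -Q$ is a prime ideal containing $M \cap -M$, hence containing $\sqrt{M \cap -M}$, so $f \in Q \cap -Q$. In particular $f$ is identically zero, and in particular bounded, on $\mathcal{Y}_M$.

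Next I would walk through the three clauses of the definition of $\widetilde{M}$. Clause (1) is immediate: $M = A$ forces $\widetilde{M} = A \ni f$. In clause (2), every element bounded on $\mathcal{Y}_M$ lies in $\mathbb{R} + M \cap -M$, so $f = \lambda_0 + g$ with $g \in M \cap -M$; since $g$ also vanishes on $\mathcal{Y}_M$ the constant $\lambda_0$ must be zero, giving $f \in M \cap -M \subseteq \widetilde{M}$, a contradiction.

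The substantive work is in clause (3): some $h \notin \mathbb{R} + M \cap -M$ is bounded on $\mathcal{Y}_M$ with $a \le h \le b$, and $\widetilde{M} = \bigcap_{a \le \lambda \le b} \widetilde{M_\lambda}$, where $M_\lambda = M + (h - \lambda)$. For each $\lambda \in [a,b]$, either $M_\lambda = A$ (so $f \in \widetilde{M_\lambda}$ trivially), or $M \cap -M \subsetneq M_\lambda \cap -M_\lambda$: the containment is clear, and the strictness follows because $h - \lambda \in M_\lambda \cap -M_\lambda$ while $h - \lambda \notin M \cap -M$ (otherwise $h \in \mathbb{R} + M \cap -M$, contradicting the hypothesis of clause (3)). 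From $M \cap -M \subseteq M_\lambda \cap -M_\lambda$ one deduces $f \in \sqrt{M_\lambda \cap -M_\lambda}$, and by the maximality of $M \cap -M$ the lemma already holds for $M_\lambda$, so $f \in \widetilde{M_\lambda}$. Intersecting over $\lambda$ yields $f \in \widetilde{M}$, contradicting the choice of $f$.

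The step I expect to be the main obstacle is securing the strict inclusion of supports that powers the induction in clause (3); fortunately this is handed to us precisely by the clause (3) hypothesis $h \notin \mathbb{R} + M \cap -M$, so no additional machinery beyond the setup of Theorem \ref{4.3} is required.
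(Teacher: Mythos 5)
Your proof is correct, but it takes a different (and longer) route than the paper's. The paper's argument needs no noetherian induction: starting from your same key observation that $f\in\sqrt{M\cap-M}$ vanishes on $\mathcal{Y}_M$, it exploits the well-definedness of $\widetilde{M}$ (Theorem \ref{4.3}(1)) to \emph{choose} the bounded element in clause (3) to be $f$ itself, with $a=b=0$; then $\widetilde{M}=\widetilde{M_0}=\widetilde{M+(f)}$ and $f\in M+(f)\subseteq\widetilde{M+(f)}$ at once. You instead work with whatever element $h$ clause (3) happens to present and descend through the fibres $M_\lambda=M+(h-\lambda)$, paying for this with a maximal-counterexample induction on the support; the strict growth $M\cap-M\subsetneq M_\lambda\cap-M_\lambda$ that you extract from $h\notin\Bbb{R}+M\cap-M$ is exactly what makes that induction terminate, and the argument is sound. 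Two minor points. First, in clause (2) your deduction that $\lambda_0=0$ from $g$ vanishing on $\mathcal{Y}_M$ tacitly assumes $\mathcal{Y}_M\neq\emptyset$; this is harmless, since a proper quadratic module extends to a maximal proper one, which is a semiordering, so $\mathcal{Y}_M=\emptyset$ forces $M=A$ and puts you back in clause (1) --- but the paper's purely algebraic version ($\lambda_0=f-g\in\sqrt{M\cap-M}$, hence $\lambda_0=0$ or $M\cap-M=A$) avoids the issue entirely. Second, both proofs ultimately lean on Theorem \ref{4.3}(1); your induction does not remove that dependence, it only relocates where the recursive definition is unwound.
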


\begin{proof} Let $f \in \sqrt{M\cap-M}$. If $f=\lambda_0+g$, $\lambda_0 \in \Bbb{R}$, $g\in M\cap-M$, then $\lambda_0 = f-g \in \sqrt{M\cap-M}$. Then either $M\cap -M = A$ (so $f\in \widetilde{M}$) or $\lambda_0 = 0$ and $f\in M\cap-M \subseteq M \subseteq \widetilde{M}$. If $f \notin \Bbb{R}+M\cap-M$, then, since $0\le f \le 0$ on $\mathcal{Y}_M$, $\widetilde{M} = \cap_{0\le \lambda \le 0} \widetilde{M_{\lambda}} = \widetilde{M_0}$, where $M_{\lambda} := M+(f-\lambda)$. Anyway, since $f\in M_0$, this means $f\in \widetilde{M_0} = \widetilde{M}$.
\end{proof}

Note that example \ref{couex} above shows that $\widetilde{M}$ and
$\overline{M}$ are not the same in general. In the example the
only polynomials bounded on $\mathcal{Y}_M$ are the reals, so
$M=\widetilde{M}$. But we have shown that $M$ is not closed, so
$M=\widetilde{M}\subsetneq M^{\ddagger}\subseteq\overline{M}$
holds.

Note also that the inclusion $\widetilde{M}\subseteq M^{\ddagger}$
is not always true. Let $M$ be the preordering of $\Bbb{R}[x,y]$
generated by $y^3, x+y, 1-xy, 1-x^2$. This is the example from
\cite{N3} with $M^{\ddagger}\subsetneq\overline{M}$. One easily
checks that $\widetilde{M}=\overline{M}$ holds, so
$M^{\ddagger}\subsetneq\widetilde{M}$ in this example.

On the other hand, in many simple cases where we are able to
compute $\widetilde{M}$, we find $\widetilde{M}=\overline{M}$:

\begin{thm}\label{4.5} Let $A$ be finitely generated. $\overline{M} = \widetilde{M}$ holds in the following cases:
\begin{enumerate}
\item $M$ finitely generated and stable.
\item $M$ finitely generated and $\dim(\frac{A}{M\cap -M}) \le 1$.
\item  $M$ archimedean.
\item $A = \Bbb{R}[\underline{x}]$ and $M$ is generated by finitely many linear polynomials.
\end{enumerate}
\end{thm}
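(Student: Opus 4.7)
Proof plan. Case (1) is a one-line combination: by Theorem~\ref{2.6}, $\overline{M}=M+\sqrt{M\cap -M}$; the summand $M$ lies in $\widetilde{M}$ trivially, and $\sqrt{M\cap -M}\subseteq \widetilde{M}$ by Lemma~\ref{4.4}, so $\overline{M}\subseteq\widetilde{M}$, while the reverse inclusion $\widetilde{M}\subseteq \overline{M}$ is Theorem~\ref{4.3}(2).

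For cases (2), (3), (4) the plan is noetherian induction on the support ideal $M\cap -M$ inside the noetherian ring $A$. Since $\widetilde{M}\subseteq \overline{M}$ always, only $\overline{M}\subseteq \widetilde{M}$ needs argument. In the inductive step I ask whether there exists $f\in A$ bounded on $\mathcal{Y}_M$ with $f\notin \Bbb{R}+M\cap -M$---and, in case (4), additionally linear. If such an $f$ exists, let $a:=\sup\{\lambda\in\Bbb{R}\mid \lambda\le f\text{ on }\mathcal{Y}_M\}$ and $b:=\inf\{\lambda\in\Bbb{R}\mid f\le \lambda\text{ on }\mathcal{Y}_M\}$; the remark following Theorem~\ref{new}(1) gives $b-f,f-a\in(M^{\ddagger})^{\ddagger}\subseteq\overline{M}$, so Theorem~\ref{new}(2) yields $\overline{M}=\bigcap_{a\le\lambda\le b}\overline{M_\lambda}$, while the definition of $\widetilde{M}$ gives $\widetilde{M}=\bigcap_{a\le\lambda\le b}\widetilde{M_\lambda}$, with $M_\lambda:=M+(f-\lambda)$. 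Each $M_\lambda$ stays in the same case: in (2) it is finitely generated (by the generators of $M$ together with $\pm(f-\lambda)$) and still satisfies $\dim(A/(M_\lambda\cap -M_\lambda))\le 1$; in (3) it is archimedean because it contains $M$; in (4) it is linearly generated because $f$ is linear. Moreover $f-\lambda\in M_\lambda\cap -M_\lambda$ while $f-\lambda\notin M\cap -M$ (otherwise $f\in\Bbb{R}+M\cap -M$), so the support strictly enlarges; the inductive hypothesis then gives $\overline{M_\lambda}=\widetilde{M_\lambda}$ for every $\lambda\in[a,b]$, and $\overline{M}=\widetilde{M}$.

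It remains to dispose of the base cases in which no such $f$ exists. In (2), Proposition~\ref{2.4}(2) makes boundedness on $\mathcal{Y}_M$ and on $\mathcal{K}_M$ coincide, so the hypothesis of Theorem~\ref{4.1} holds, $M$ is stable, and case (1) applies. In (4), a Farkas-type argument (via \cite[Lemma 7.1.5]{M}) shows that when $\mathcal{K}_M\ne\emptyset$ the linear polynomials in $M$ are exactly the linear polynomials $\ge 0$ on $\mathcal{K}_M$ (if $\mathcal{K}_M=\emptyset$ then $-1\in M$ and $M=A$ is trivial), so linear boundedness on $\mathcal{Y}_M$ and on $\mathcal{K}_M$ coincide; hence Theorem~\ref{poly} applies, $M$ is stable, and case (1) applies. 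In (3), archimedeanness forces every element of $A$ to be bounded on $\mathcal{Y}_M$, so the base case reduces to $A=\Bbb{R}+M\cap -M$; then $A/(M\cap -M)$ is $0$ (and $M=A$) or canonically $\Bbb{R}$, and a direct check identifies $M$ with the preimage of $\Bbb{R}_{\ge 0}$ under the continuous quotient map $A\to\Bbb{R}$, which is closed, so $\overline{M}=M\subseteq \widetilde{M}$. The main technical obstacle is the case (4) verification of the equivalence of linear boundedness on $\mathcal{Y}_M$ and on $\mathcal{K}_M$, as this is precisely what lets the inductive dichotomy be restricted to linear $f$ and thereby preserves linear generation of $M_\lambda$ along the induction.
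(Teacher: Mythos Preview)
Your proof is correct and follows the same strategy as the paper: case (1) via Theorem~\ref{2.6} and Lemma~\ref{4.4}, and cases (2)--(4) by noetherian induction on the support ideal, with the base cases reducing to stability (via Theorem~\ref{4.1} in (2), Theorem~\ref{poly} in (4)) or to the trivial situation $A=\Bbb{R}+M\cap -M$ in (3).

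Your treatment of case (4) is in fact more careful than the paper's. The paper asserts that at the maximal counterexample ``the only elements bounded on $\mathcal{Y}_M$ are the elements from $\Bbb{R}+M\cap -M$,'' but the inductive hypothesis only applies to $M_\lambda=M+(f-\lambda)$ when $M_\lambda$ is again linearly generated, i.e., when $f$ is linear; so strictly speaking only the linear version of that claim is justified. You handle this correctly by restricting the dichotomy to linear $f$ from the outset, and then your Farkas observation (linear $\ell\ge 0$ on $\mathcal{K}_M$ implies $\ell\in M$, hence $\ell\ge 0$ on $\mathcal{Y}_M$) is exactly what bridges the base case to the hypothesis of Theorem~\ref{poly}. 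This is the same bridge the paper uses (``any linear polynomial bounded on $\mathcal{K}_M$ is also bounded on $\mathcal{Y}_M$''), just with the logical order made explicit. Your added detail in the base case of (3), identifying $M$ with the preimage of $\Bbb{R}_{\ge 0}$ under the quotient $A\to\Bbb{R}$, is also a welcome elaboration of the paper's one-line ``$\overline{M}=M=\widetilde{M}$.''
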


 \begin{proof} (1) $\overline{M} =M+\sqrt{M\cap-M}$, by Theorem \ref{2.6}. By Lemma \ref{4.4} this implies $\overline{M} \subseteq \widetilde{M}$.

(2) Choose $M$ with $M\cap -M$ maximal such that
$\dim(\frac{A}{M\cap -M}) \le 1$ and $\overline{M} \ne
\widetilde{M}$. In view of Ths. \ref{new} and \ref{4.1} and (1)
and the recursive description of $\widetilde{M}$, we again have
$\overline{M} = \widetilde{M}$, which is a contradiction.

(3) Choose $M$ with $M\cap -M$ maximal such that $M$ is
archimedean, $\overline{M} \ne \widetilde{M}$. Since $M$ is
archimedean every element of $A$ is bounded on $\mathcal{Y}_M$. If
$A = \Bbb{R} +M\cap -M$ then $\overline{M} = M= \widetilde{M}$, a
contradiction. If there is some $f\in A$, $f\notin \Bbb{R}
+M\cap-M$, then by Theorem \ref{new} and the recursive description
of $\widetilde{M}$, $\overline{M} = \widetilde{M}$, again a
contradiction.

(4) Take such $M$ with $M\cap-M$ maximal such that
$\widetilde{M}\neq\overline{M}$. So again the only elements
bounded on $\mathcal{Y}_M$ are the elements from $\Bbb{R}+M\cap
-M$. Any linear polynomial that is bounded on $\mathcal{K}_M$ is
also bounded on $\mathcal{Y}_M$. So by Theorem \ref{poly}, $M$ is
stable, and we are in case (1).
\end{proof}

\section{Appendix}

In this section we construct a cone for which  the sequence
of iterated sequential closures terminated after $n$ steps.
Therefore let
$$E=\bigoplus_{i=0}^{\infty} \R\cdot e_i =\left\{
(f_i)_{i\in\N}\mid f_i\in\R, \mbox{ only finitely many } f_i\neq
0\right\}$$ be a countable dimensional $\R$-vector space. For
$m\in \N\setminus\{0\}$ we write
$$W_m:=\bigoplus_{i=0}^{m-1}\R\cdot e_i,$$ so the increasing
sequence $(W_m)_{m\in \N}$ of finite dimensional subspaces
exhausts the whole space $E$.
 For $n\in\{1,2,\ldots\}$ and
$l=(l_0,l_1,\ldots,l_n)\in\left(\N\setminus\{0\}\right)^{n+1}$
define

\begin{align*}V(l)&:= \underbrace{[\frac{1}{l_1},1]\times \cdots\times
[\frac{1}{l_1},1]}_{l_0 \mbox{ times }}\times
\underbrace{[\frac{1}{l_2},1]\times \cdots\times
[\frac{1}{l_2},1]}_{l_1 \mbox{ times }}\times\cdots \\ & \times
\underbrace{[\frac{1}{l_{n}},1]\times \cdots \times
 [\frac{1}{l_{n}},1]}_{l_{n-1} \mbox{ times
}}.\end{align*} $V(l)$ is a compact subset of $W_{l_0+\cdots
+l_{n-1}}$. Let
$$U(l):= V(l)\times\bigoplus_{i=l_0+\cdots+l_{n-1}}^{\infty}
[0,1]\cdot e_i, $$ so $U(l)\subseteq E$ and $U(l)\cap W_m$ is
compact for every $m\in\N$; indeed non-empty if and only if $m\geq
l_0+\cdots+l_{n-1}$.  Now define
 $$M_n:=\bigcup_{l\in \left(\N\setminus\{0\}\right)^{n+1}}
U(l).$$  The intention behind this is that $M_n$ contains $n$
"steps", and taking the sequential closure removes one at a time.

We have for $m\geq n\geq 2$ $$\overline{M_n\cap W_m}\subseteq
M_{n-1}.$$ To see this take a converging sequence $(x_i)_i$ from
$M_n\cap W_m$. So for each $x_i$ there is some $l^{(i)}\in
\left(\N\setminus\{0\}\right)^{n+1}$ such that $x_i\in
U(l^{(i)})$. As  $ U(l)\cap W_m$ is only non-empty if
$l_0+\cdots+l_{n-1}\leq m$, we can assume without loss of
generality (by choosing a subsequence), that the $l^{(i)}$
coincide in all but the last component. This shows that the limit
of the sequence $(x_i)_i$ belongs to $M_{n-1}$ (indeed to
$U(l_0^{(i)},\ldots, l_{n-1}^{(i)})\cap W_m$).

So $(M_n)^{\ddagger}\subseteq M_{n-1}$, and the other inclusion is
obvious. We thus have for $n\geq 2:$
$$(M_n)^{\ddagger}=M_{n-1}.$$ In addition, $$M_1\subsetneq M_1^{\ddagger}=
\bigoplus_{i=0}^{\infty} \ [0,1]\cdot e_i,$$ which is closed. This
shows that the sequence of sequential closures for $M_n$
terminates precisely after $n$ steps at  $\overline{M_n}=
\bigoplus_{i=0}^{\infty} \ [0,1]\cdot e_i$.

 Let $\cc(M_n)$ denote the
cone generated by $M_n$, i.e. $\cc(M_n)$ consists of all
finite positive combinations of elements from $M_n$, including
$0$. We have for $n\geq 2$
$$\cc(M_n)^{\ddagger}=\cc(M_{n-1}).$$
To see "$\subseteq$" suppose $x\in\cc(M_n)^{\ddagger}.$ Then we
have a sequence $(x_i)_i$ in some $\cc(M_n)\cap W_m=\cc(M_n\cap
W_m)$ that converges to $x$ in $W_m$. Write $$x_i =
\lambda_1^{(i)}a_1^{(i)} + \cdots \lambda_N^{(i)}a_N^{(i)}$$ with
all $a_j^{(i)}\in M_n\cap W_m$ and all $\lambda_j^{(i)}\geq 0$. We
can choose the same sum length $N$ for all $x_i$, by the conic
version of Carath\'{e}odory's Theorem (see for example \cite{Ba},
Problem 6, p. 65). By choosing a subsequence of $(x_i)_i$ we can
assume that for all $j\in\{1,\ldots,N\}$ the sequence
$(a_j^{(i)})_i$ converges to some element $a_j$. This uses
$M_n\cap W_m\subseteq [0,1]^m.$ All elements $a_j$ lie in
$M_n^{\ddagger}=M_{n-1}.$ As $n\geq 2$, the first component of
each element $a_j^{(i)}$ is at least $\frac1m$. So all the
sequences $(\lambda_j^{(i)})_i$ are bounded and therefore without
loss of generality also convergent. This shows that $x$ belongs to
$\cc(M_{n-1})$.

To see "$\supseteq$" note that $M_n^{\ddagger}\subseteq
\cc(M_n)^{\ddagger}$ and  $\cc(M_n)^{\ddagger}$ is a cone.
So
$$\cc(M_{n-1})=\cc(M_n^{\ddagger})\subseteq \cc(M_n)^{\ddagger}.$$ For $n=1$
we have
$$\cc(M_1)=\left\{f=(f_i)_i\in\bigoplus_{i=0}^{\infty}\R_{\geq 0}\cdot e_i
\mid f_0=0\Rightarrow f=0\right\},$$ so $$\cc(M_1)\subsetneq
\cc(M_1)^{\ddagger}= \bigoplus_{i=0}^{\infty}\R_{\geq 0}\cdot
e_i,$$ which is closed. So all in all we have proved:

\noindent  For $n\in\{1,2,\ldots\}$ and the cone
$\cc(M_n)$, the
 sequence of iterated sequential closures terminates precisely
 after $n$ steps at
$$\overline{\cc(M_n)} = \bigoplus_{i=0}^{\infty}\R_{\geq 0}\cdot
e_i.$$

\bn
\bn
\adresse

\end{document}